\newtheorem{theorem}{Theorem}
\newtheorem{lemma}[theorem]{Lemma}
\begin{document}
%\linenumbers
\onehalfspace

\title{A bound on the dissociation number}
\author{Felix Bock\and Johannes Pardey\and Lucia D. Penso\and Dieter Rautenbach}
\date{}

\maketitle
\vspace{-10mm}
\begin{center}
{\small 
Institute of Optimization and Operations Research, Ulm University, Ulm, Germany\\
\texttt{$\{$felix.bock,johannes.pardey,lucia.penso,dieter.rautenbach$\}$@uni-ulm.de}
}
\end{center}

\begin{abstract}
The dissociation number ${\rm diss}(G)$ of a graph $G$
is the maximum order of a set of vertices of $G$
inducing a subgraph that is of maximum degree at most $1$. 
Computing the dissociation number of a given graph
is algorithmically hard even when restricted to subcubic bipartite graphs.
For a graph $G$ with 
$n$ vertices, 
$m$ edges,
$k$ components, and
$c_1$ induced cycles of length $1$ modulo $3$, we show
${\rm diss}(G)\geq n-\frac{1}{3}\Big(m+k+c_1\Big)$.
Furthermore, we characterize the extremal graphs
in which every two cycles are vertex-disjoint.\\[3mm]
{\bf Keywords:} Dissociation set
\end{abstract}

\section{Introduction}

We consider finite, simple, and undirected graphs, and use standard terminology.
A set $D$ of vertices of a graph $G$
is a {\it dissociation set} in $G$ if the subgraph $G[D]$ of $G$ induced by $D$
has maximum degree at most $1$.
The {\it dissociation number ${\rm diss}(G)$} of $G$ 
is the maximum order of a dissociation set in $G$.
The dissociation number is algorithmically hard even when restricted, 
for instance, to subcubic bipartite graphs \cite{bocalo,ordofigowe,ya}.
Fast exact algorithms \cite{kakasc}, 
(randomized) approximation algorithms \cite{kakasc,hobu}, and 
fixed parameter tractability \cite{ts}
have been studied for this parameter or its dual, 
the {\it $3$-path (vertex) cover} number.
Several lower bounds on the dissociation number were proposed:
If $G$ is a graph of order $n$ and size $m$, then
\begin{eqnarray}\label{e0}
{\rm diss}(G) & \geq & 
\begin{cases}
\frac{n}{\left\lceil\frac{\Delta+1}{2}\right\rceil} & \mbox{, if $G$ has maximum degree $\Delta$ \cite{brkakase},}\\
\frac{4}{3}\sum\limits_{u\in V(G)}\frac{1}{d_G(u)+1} & \mbox{, if $G$ has no isolated vertex \cite{brkakase},}\\
\sum\limits_{u\in V(G)}\frac{1}{d_G(u)+1}+\sum\limits_{uv\in E(G)}{|N_G[u]\cup N_G[v]|\choose 2}^{-1} & \mbox{, \cite{goharasc},}\\
\frac{n}{2} & \mbox{, if $G$ is outerplanar \cite{brkakase}}\\
\frac{2n}{3} & \mbox{, if $G$ is a tree \cite{brkakase},}\\
\frac{2n}{k+2}-\frac{m}{(k+1)(k+2)}& \mbox{, if $k=\left\lceil\frac{m}{n}\right\rceil-1$ \cite{brjakaseta}, and}\\
\frac{2n}{3}-\frac{m}{6}& \mbox{, \cite{brkakase}.}
\end{cases}
\end{eqnarray}
The results in the present papers were inspired 
by bounds in (\ref{e0}).

Our main result is the following.
\begin{theorem}\label{theorem1}
If $G$ is a graph with 
$n$ vertices, 
$m$ edges,
$k$ components, and
$c_1$ induced cycles of length $1$ modulo $3$, then 
\begin{eqnarray}\label{e1}
{\rm diss}(G) & \geq & n-\frac{1}{3}\Big(m+k+c_1\Big).
\end{eqnarray}
\end{theorem}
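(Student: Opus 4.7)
My plan is to prove the bound by induction on $n+m$, using the structure outlined below.

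\emph{Setup and base cases.} Since $n$, $m$, $c_1$, and ${\rm diss}(\cdot)$ are all additive over the connected components of $G$ while $k$ is exactly the number of such components, summing the desired inequality across components reduces the problem to the case $k=1$, that is, to showing
\[
{\rm diss}(G)\;\geq\; n-\tfrac{1}{3}\bigl(m+1+c_1\bigr)
\]
for connected $G$. Small cases $n\leq 2$ are trivial, and when $G$ is a tree, $m=n-1$ and $c_1=0$, so the bound reduces to ${\rm diss}(G)\geq 2n/3$, which is the known tree bound already appearing in (\ref{e0}). This handles the base of the induction.

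\emph{Pendant vertex reduction.} If $G$ has a pendant vertex $v$ with neighbor $u$, then $v$ lies on no cycle, so $G-v$ has $n-1$ vertices, $m-1$ edges, the same $c_1$, and is still connected for $n\geq 3$. A short swap argument at $u$ either yields ${\rm diss}(G)\geq {\rm diss}(G-v)+1$, in which case induction on $G-v$ closes the step with $\tfrac{1}{3}$ of slack, or reveals an exceptional configuration (e.g.\ $u$ itself of low degree) which can be handled by removing a pendant $P_2$ or $P_3$ together with two matched dissociation vertices and applying induction to the remainder.

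\emph{Minimum degree at least 2.} The main case is $\delta(G)\geq 2$, so $G$ contains a cycle. I would take a shortest cycle $C$ in $G$; necessarily $C$ is induced, of some length $\ell$. The plan is to select an optimal cover of $C$, of size $\lceil \ell/3\rceil$ outside the dissociation set, and apply induction to an appropriate subgraph (either $G-v$ for a well-chosen degree-$2$ vertex $v\in V(C)$, or $G-V(C)$). The accounting in the three residue classes of $\ell\pmod 3$ is: the $\ell$ edges of $C$ provide a deficiency budget of $\ell/3$; an extra $\tfrac{1}{3}$ is supplied by the contribution of $C$ itself to $c_1$ precisely when $\ell\equiv 1\pmod 3$; and the remaining $\tfrac{1}{3}$ needed when $\ell\equiv 2\pmod 3$ comes from an increase in $k$ (or a further drop in $c_1$) produced by the chosen deletion.

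\emph{Main obstacle.} The technical heart of the argument is tracking the change in $c_1$ under the reduction, since a vertex deletion can both destroy induced cycles through the deleted vertex and create new induced cycles by removing chords of previously non-induced cycles. The minimality of $\ell$ is the key lever here: any newly induced cycle in the residual graph must have length at least $\ell$, and chord configurations across a shortest induced cycle are highly restricted. Making this bookkeeping work out so that the net contribution from $c_1$ and $k$ covers the deficiency in every residue class of $\ell\pmod 3$ — and in particular in the delicate $\ell\equiv 2\pmod 3$ subcase — is where the bulk of the technical effort is expected to lie.
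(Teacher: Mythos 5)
Your outline has two genuine gaps, and the difficulty you flag as the ``main obstacle'' is not actually an obstacle. First, the non-issue: deleting \emph{vertices} can never create new induced cycles, because an induced cycle of $G-u$ spans a vertex set disjoint from $u$ and hence induces the same subgraph in $G$; so $c_1$ is monotone non-increasing under every reduction you propose (the paper dismisses this in one clause as ``the obvious fact that $c_1(G')\leq c_1$''). The real obstacle, which your plan does not address, is the \emph{compatibility} of dissociation sets across the cut when you delete $V(C)$: the inequality ${\rm diss}(G)\geq{\rm diss}(G-V(C))+{\rm diss}(C)$ is simply false when more than one vertex of $C$ has neighbours outside $C$, since a chosen vertex of $C$ may pick up two neighbours, one inside $C$ and one outside. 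For $K_4$ with $C$ a triangle this would give ${\rm diss}(K_4)\geq 2+1=3$, whereas ${\rm diss}(K_4)=2$. The paper's way around this is a step you are missing entirely: if $G$ is not cycle-disjoint, some vertex $u$ lying on two cycles satisfies $k(G-u)\leq d_G(u)-2$, so deleting $u$ (without even using it in the dissociation set) already decreases $m+k$ by at least $2$ and the induction closes. This reduces to cycle-disjoint graphs, where a longest path in the block-cutvertex tree produces an endblock cycle attached to the rest at a \emph{single} cutvertex $v$; one then uses that $C_\ell$ has a maximum dissociation set avoiding $v$, which is what makes the union of the two dissociation sets legitimate. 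Your shortest-cycle choice gives inducedness but no control over how many vertices of $C$ see the rest of the graph, and the $\ell\equiv 2\pmod 3$ accounting you hope to rescue via ``an increase in $k$'' cannot be carried out without that control.

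Second, the pendant reduction as stated is invalid: ${\rm diss}(G)\geq{\rm diss}(G-v)+1$ fails already for $P_3$ obtained from $P_2$, and in general whenever every maximum dissociation set of $G-v$ matches $u$ to another neighbour; a swap at $u$ preserves cardinality but does not gain the needed $+1$ (the induction requires a gain of at least $2/3$, hence of $1$). You acknowledge an ``exceptional configuration,'' but that configuration is exactly where the work lies, and your sketch of ``removing a pendant $P_2$ or $P_3$ together with two matched dissociation vertices'' is not backed by the counting that makes it work. In the paper this is the case $p=1$ with $B'$ a $K_2$: one removes $N_G[v]=\{u,v,w\}$, gains $2$ in the dissociation set, and the step closes only because of the inequality $m(G'')+k''\leq(m-d_G(w)-1)+(d_G(w)-1)=m-2$, i.e., because the components created by deleting $w$ are paid for by the edges at $w$. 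Without organizing the argument around a cutvertex all but one of whose blocks are endblocks (so that one knows exactly which of the cases -- cycle endblock, $p\geq 2$ pendant edges, or a single pendant edge -- occurs), neither your pendant case nor your minimum-degree-$2$ case can be completed as described.
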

Theorem \ref{theorem1} 
generalizes the lower bound $2n/3$ for trees of order $n$ in (\ref{e0}),
strengthens the general lower bound $\frac{2n}{3}-\frac{m}{6}$ 
in (\ref{e0}) for many graphs, and 
almost implies the lower bound $n/2$ for subcubic graphs of order $n$, which follows from the first bound in (\ref{e0}).
In the proof of Theorem \ref{theorem1}, 
graphs in which all cycles are pairwise vertex-disjoint play an essential role.
We call such graphs {\it cycle-disjoint}; 
their components are restricted cactus graphs,
where a {\it cactus} is a connected graph 
in which every block is either a $K_2$ or a cycle.
As a step towards the understanding of all extremal graphs for Theorem \ref{theorem1}, 
we consider the extremal cycle-disjoint graphs in more detail.
We propose three extension operations $(O_1)$, $(O_2)$, and $(O_3)$ applicable to a given graph $G'$, 
attaching a $P_3$ or a cycle of length not $0$ modulo $3$ by a bridge to $G'$, illustrated in Figure \ref{fig1}.
It is easy to see that applying one of these operations to a graph that satisfies (\ref{e1}) with equality
yields a graph that satisfies (\ref{e1}) with equality.
Since $P_3$ and the cycles of lengths not $0$ modulo $3$ satisfy (\ref{e1}) with equality,
this already allows to construct quite a rich family of extremal graphs, yet not all of them.

\begin{figure}[H]
\begin{center}
$\mbox{}$\hfill
% This is a LaTeX picture output by TeXCAD.
% File name: [o2.pic].
% Version of TeXCAD: 4.51
% Reference / build: 27-Nov-2018 (rev. a75)
% For new versions, check: http://texcad.sf.net/
% Options on the following lines.
%\grade{\on}
%\emlines{\off}
%\epic{\off}
%\beziermacro{\on}
%\reduce{\on}
%\snapping{\on}
%\pvinsert{% Your \input, \def, etc. here}
%\quality{8.000}
%\graddiff{0.005}
%\snapasp{1}
%\zoom{45.2548}
\unitlength 1mm % = 2.845pt
\linethickness{0.4pt}
\ifx\plotpoint\undefined\newsavebox{\plotpoint}\fi % GNUPLOT compatibility
\begin{picture}(28,11)(0,0)
\put(25,8){\oval(10,8)[]}
\put(25,8){\makebox(0,0)[cc]{$G'$}}
\put(14,8){\circle*{1}}
\put(14,8){\line(1,0){7}}
\put(17,-5){\makebox(0,0)[cc]{$(O_1)$}}
\put(7,8){\circle*{1}}
\put(0,8){\circle*{1}}
\put(0,8){\line(1,0){14}}
\put(0,11){\makebox(0,0)[cc]{$u$}}
\put(7,11){\makebox(0,0)[cc]{$v$}}
\put(14,11){\makebox(0,0)[cc]{$w$}}
\end{picture}\hfill
% This is a LaTeX picture output by TeXCAD.
% File name: [o3.pic].
% Version of TeXCAD: 4.51
% Reference / build: 27-Nov-2018 (rev. a75)
% For new versions, check: http://texcad.sf.net/
% Options on the following lines.
%\grade{\on}
%\emlines{\off}
%\epic{\off}
%\beziermacro{\on}
%\reduce{\on}
%\snapping{\on}
%\pvinsert{% Your \input, \def, etc. here}
%\quality{8.000}
%\graddiff{0.005}
%\snapasp{1}
%\zoom{45.2548}
\unitlength 1mm % = 2.845pt
\linethickness{0.4pt}
\ifx\plotpoint\undefined\newsavebox{\plotpoint}\fi % GNUPLOT compatibility
\begin{picture}(25,13)(0,0)
\put(22,8){\oval(10,8)[]}
\put(22,8){\makebox(0,0)[cc]{$G'$}}
\put(11,8){\circle*{1}}
\put(11,8){\line(1,0){7}}
\put(14,-5){\makebox(0,0)[cc]{$(O_2)$}}
\put(4,4){\circle*{1}}
\put(4,12){\circle*{1}}
%\emline(4,12)(11,8)
\multiput(4,12)(.058823529,-.033613445){119}{\line(1,0){.058823529}}
%\end
%\emline(11,8)(4,4)
\multiput(11,8)(-.058823529,-.033613445){119}{\line(-1,0){.058823529}}
%\end
\put(11,11){\makebox(0,0)[cc]{$v$}}
\put(0,12){\makebox(0,0)[cc]{$u$}}
\put(0,4){\makebox(0,0)[cc]{$u'$}}
\end{picture}\hfill
% This is a LaTeX picture output by TeXCAD.
% File name: [o1.pic].
% Version of TeXCAD: 4.51
% Reference / build: 27-Nov-2018 (rev. a75)
% For new versions, check: http://texcad.sf.net/
% Options on the following lines.
%\grade{\on}
%\emlines{\off}
%\epic{\off}
%\beziermacro{\on}
%\reduce{\on}
%\snapping{\on}
%\pvinsert{% Your \input, \def, etc. here}
%\quality{8.000}
%\graddiff{0.005}
%\snapasp{1}
%\zoom{45.2548}
\unitlength 1mm % = 2.845pt
\linethickness{0.4pt}
\ifx\plotpoint\undefined\newsavebox{\plotpoint}\fi % GNUPLOT compatibility
\begin{picture}(28,15)(0,0)
\put(25,8){\oval(10,8)[]}
\put(25,8){\makebox(0,0)[cc]{$G'$}}
\put(7,8){\circle{14}}
\put(14,8){\circle*{1}}
\put(14,8){\line(1,0){7}}
\put(7,8){\makebox(0,0)[cc]{$C_\ell$}}
%\put(7,5){\makebox(0,0)[cc]{$\ell\not\equiv 0\mod 3$}}
\put(17,-5){\makebox(0,0)[cc]{$(O_3)$ {\footnotesize (with $\ell\not\equiv 0\,{\rm mod}\, 3$)}}}
\end{picture}
\hfill$\mbox{}$
\end{center}
\caption{Operations constructing an extremal graph from a smaller extremal graph $G'$.}
\label{fig1}
\end{figure}
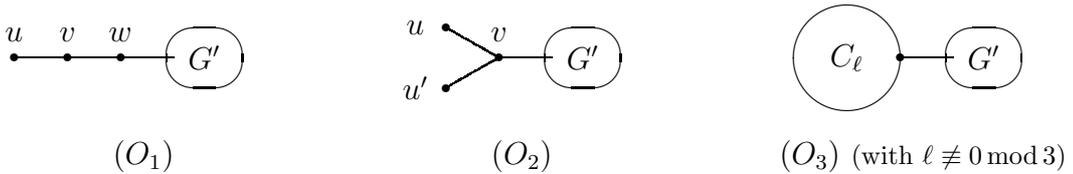
The two operations $(O_1)$ and $(O_2)$ 
are sufficient for the constructive characterization of all trees $T$ 
of order $n$ with ${\rm diss}(T)=2n/3$, that is, 
of all trees that are extremal for the bound from \cite{brkakase} stated in (\ref{e0}).
Let ${\cal T}$ be the set of all trees that arise from $P_3$
by repeated applications of the two operations $(O_1)$ and $(O_2)$,
attaching a new $P_3$ by a bridge to trees in ${\cal T}$.

\begin{theorem}\label{theorem2}
For a tree $T$ of order $n$, the following statements are equivalent.
\begin{enumerate}[(a)]
\item ${\rm diss}(T)=\frac{2n}{3}$.
\item $T\in {\cal T}$.
\item $n\equiv 0\,{\rm mod}\, 3$, and, for every vertex $y$ of $T$,
at most two components of $T-y$ have order not $0$ modulo $3$.
\end{enumerate}
\end{theorem}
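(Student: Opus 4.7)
I would prove the three equivalences cyclically: (b)$\Rightarrow$(a)$\Rightarrow$(c)$\Rightarrow$(b).

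\emph{For (b)$\Rightarrow$(a)}, I induct on the number of operations used to build $T$ from $P_3$. The base case $T=P_3$ satisfies ${\rm diss}(P_3)=2=\tfrac{2}{3}\cdot 3$. Each of $(O_1)$ and $(O_2)$ attaches a new $P_3$ via a single bridge, adding $3$ vertices and $3$ edges. A maximum dissociation set of $G'$ extends by two appropriate vertices of the new $P_3$ (e.g., $\{u,v\}$ for $(O_1)$ and $\{u,u'\}$ for $(O_2)$), giving ${\rm diss}(G)\geq{\rm diss}(G')+2$. Conversely, any dissociation set of $G$ restricts to a dissociation set of $G'$ and contains at most two of the three new vertices, since they induce a $P_3$, so ${\rm diss}(G)\leq{\rm diss}(G')+2$. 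Hence ${\rm diss}(G)=\tfrac{2}{3}n(G)$.

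\emph{For (a)$\Rightarrow$(c)}, ${\rm diss}(T)=\tfrac{2n}{3}\in\mathbb{Z}$ forces $3\mid n$. Fix a vertex $y\in V(T)$ and let $T_1,\ldots,T_r$ be the components of $T-y$. Theorem~\ref{theorem1} applied to each tree $T_i$ gives ${\rm diss}(T_i)\geq\lceil 2|T_i|/3\rceil$; at the same time any dissociation set of $T-y$ is also one of $T$, so ${\rm diss}(T-y)\leq{\rm diss}(T)=\tfrac{2n}{3}$. Summing and writing $b,c$ for the number of components with $|T_i|\equiv 1,2\pmod 3$ respectively yields $b+2c\leq 2$, hence $b+c\leq 2$, which is precisely the second clause of (c).

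\emph{For (c)$\Rightarrow$(b)}, I induct on $n$; the base $n=3$ is immediate. For $n\geq 6$, the crux is a \emph{reduction lemma}: $T$ contains either a \emph{pendant cherry} (a vertex with exactly two leaf neighbors and exactly one non-leaf neighbor), enabling a reversal of $(O_2)$, or a \emph{pendant $P_3$ tail} (a path $u_0u_1u_2$ with $u_0$ a leaf and $d_T(u_1)=d_T(u_2)=2$), enabling a reversal of $(O_1)$. To prove it, assume neither occurs, root $T$, and pick a support vertex $v^*$ of maximum depth. A short argument shows all descendants of $v^*$ must be leaves; combining (c) at $y=v^*$ with the absence of a pendant cherry forces $v^*$ to have exactly one leaf child, so $d_T(v^*)=2$. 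The absence of a pendant $P_3$ tail then gives $d_T(p)\geq 3$ for the parent $p$ of $v^*$. By the deepest-support choice, every child of $p$ is either a leaf or a degree-$2$ support like $v^*$; let $a$ and $b\geq 1$ be their respective counts. Applying (c) at $y=p$ (noting $p$ cannot be the root without directly violating (c)) forces $a+b=2$ and $|X|\equiv 0\pmod 3$ for the component $X$ of $T-p$ containing $p$'s parent. For either $(a,b)\in\{(1,1),(0,2)\}$, the identity $n=1+a+2b+|X|$ contradicts $n\equiv 0\pmod 3$.

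Given a reducible pendant, let $T'$ be obtained by deleting its three vertices. A routine verification shows $T'$ still satisfies (c): for every $y\in V(T')$, removing the size-$3$ pendant subtree from $T-y$ either wholly deletes an order-$3$ component (when $y$ is the attachment vertex) or reduces a single component's order by exactly $3$ while keeping it connected, so all mod-$3$ residues of component orders are preserved. By induction $T'\in\mathcal{T}$, and applying the corresponding operation to $T'$ yields $T\in\mathcal{T}$. I expect the reduction lemma to be the main obstacle, as it couples the global divisibility $3\mid n$ with the local counts at $p$ derived from (c) in a way that requires the careful deepest-support case analysis.
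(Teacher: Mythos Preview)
Your proof is correct and follows the same cyclic scheme (b)$\Rightarrow$(a)$\Rightarrow$(c)$\Rightarrow$(b) as the paper, but two of the three implications are handled differently.

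For (a)$\Rightarrow$(c), the paper proceeds by induction on $n$: it takes a longest path $uvwx\ldots$, bounds $d_T(v)\in\{2,3\}$ via the tree bound in (\ref{e0}), removes the appropriate three vertices, and then transfers property~(c) from the smaller pieces back to $T$. Your argument is more direct and shorter: applying ${\rm diss}(T_i)\geq\lceil 2|T_i|/3\rceil$ to the components of $T-y$ and comparing with ${\rm diss}(T-y)\leq{\rm diss}(T)=2n/3$ immediately yields $b+2c\leq 2$, hence $b+c\leq 2$. This avoids induction entirely and is a genuine simplification.

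For (c)$\Rightarrow$(b), both proofs look for a removable pendant $P_3$ and then induct. The paper first disposes of the case where some vertex with all but one neighbor a leaf has degree~$3$ (reverse $(O_2)$), and otherwise takes a longest path $uvwx\ldots$, removes $\{u,v,w\}$, and shows via a component-size argument that $T-\{u,v,w\}$ must be connected (reverse $(O_1)$). Your reduction lemma reaches the same two outcomes via a deepest-support-vertex analysis, using (c) at $v^*$ and then at its parent $p$ together with the global congruence $3\mid n$ to rule out the remaining $(a,b)\in\{(1,1),(0,2)\}$ configurations. The two arguments are close in spirit; yours is a bit more explicit about why no obstruction can occur, while the paper's longest-path choice packages the same information more implicitly. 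Either way, the verification that the smaller tree $T'$ again satisfies~(c) is straightforward, as you note.
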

Next to the three simple operations illustrated in Figure \ref{fig1},
we introduce one slightly more complicated operation
involving so-called {\it ((very) good) spiked cycles}:
For positive integers $\ell$ and $k$ with $\ell\geq \max\{ 3,k\}$, 
and indices $i_1,\ldots,i_k\in [\ell]$ with $i_1<i_2<\ldots<i_k$,
a {\it spiked cycle $C^*$ with $k$ spikes at $\{ i_1,\ldots,i_k\}$}
arises from the cycle $C:u_1u_2\ldots u_\ell u_1$ of length $\ell$
by attaching a new endvertex $v_{i_j}$ to $u_{i_j}$ for every $j\in [k]$.
The spiked cycle $C^*$ is {\it good} if 
either $k=1$ and $\ell\equiv 1\,{\rm mod}\,3$
or $k\geq 2$,
\begin{itemize}
\item $i_{j+1}-i_j\equiv 2\,{\rm mod}\,3$ for every $j\in [k-1]$, and
\item $\ell+i_1-i_k\equiv 1\,{\rm mod}\,3$,
\end{itemize}
that is, the $k$ paths in $C^*$ between vertices of degree $3$ 
whose internal vertices have degree $2$, 
have lengths $2,\ldots,2$, and $1$ modulo $3$.
The spiked cycle $C^*$ is {\it very good} if it is good and 
\begin{itemize}
\item $\ell\not\equiv 1\,{\rm mod}\,3$,
\end{itemize}
that is, in particular, $k\geq 2$.
See Figure \ref{fig2} for an illustration.

\begin{figure}[H]
\begin{center}
% This is a LaTeX picture output by TeXCAD.
% File name: [o6.pic].
% Version of TeXCAD: 4.51
% Reference / build: 27-Nov-2018 (rev. a75)
% For new versions, check: http://texcad.sf.net/
% Options on the following lines.
%\grade{\on}
%\emlines{\off}
%\epic{\off}
%\beziermacro{\on}
%\reduce{\on}
%\snapping{\on}
%\pvinsert{% Your \input, \def, etc. here}
%\quality{8.000}
%\graddiff{0.005}
%\snapasp{1}
%\zoom{19.0275}
\unitlength 1mm % = 2.845pt
\linethickness{0.4pt}
\ifx\plotpoint\undefined\newsavebox{\plotpoint}\fi % GNUPLOT compatibility
\begin{picture}(59,38)(0,0)
\put(30,18){\oval(50,20)[]}
\put(53,26){\circle*{1}}
\put(53,10){\circle*{1}}
\put(55,18){\circle*{1}}
\put(5,18){\circle*{1}}
\put(7,26){\circle*{1}}
\put(7,10){\circle*{1}}
\put(30,8){\circle*{1}}
\put(35,28){\circle*{1}}
\put(45,28){\circle*{1}}
\put(25,28){\circle*{1}}
\put(15,28){\circle*{1}}
\put(7,26){\line(-1,1){5}}
\put(25,28){\line(0,1){7}}
\put(53,10){\line(1,-1){5}}
\put(38,8){\circle*{1}}
\put(46,8){\circle*{1}}
\put(22,8){\circle*{1}}
\put(14,8){\circle*{1}}
\put(38,8){\line(0,-1){7}}
\put(22,8){\line(0,-1){7}}
\put(2,31){\circle*{1}}
\put(25,35){\circle*{1}}
\put(58,5){\circle*{1}}
\put(38,1){\circle*{1}}
\put(22,1){\circle*{1}}
\put(1,18){\makebox(0,0)[cc]{$u_1$}}
\put(4,7){\makebox(0,0)[cc]{$u_{15}$}}
\put(10,23){\makebox(0,0)[cc]{$u_2$}}
\put(25,38){\makebox(0,0)[cc]{$v_4$}}
\put(25,24){\makebox(0,0)[cc]{$u_4$}}
\put(22,11){\makebox(0,0)[cc]{$u_{13}$}}
\end{picture}
\end{center}
\caption{A very good spiked cycle with $\ell=15$ and $k=5$ 
spikes at $\{ i_1,\ldots,i_k\}=\{ 2,4,9,11,13\}$.
Note that removing $v_4$ results in a spiked cycle that is not good.}
\label{fig2}
\end{figure}
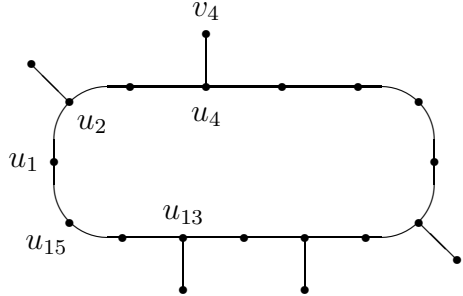
Let ${\cal C}$ be the set of all graphs that arise from the graphs in 
$${\cal C}_0=\Big\{ P_3\Big\}
\cup \Big\{C_\ell: \ell\in \mathbb{N},\, \ell\geq 3,\mbox{ and }\ell\not\equiv 0\,{\rm mod}\, 3\Big\}
\cup \Big\{C^*: C^*\mbox{ is a very good spiked cycle}\Big\}$$
by repeated applications of the three operation $(O_1)$, $(O_2)$, and $(O_3)$,
as well as the fourth operation $(O_4)$ of forming 
the disjoint union of some graph $G'$ in ${\cal C}$ with 
a very good spiked cycle $C^*$, and 
adding a bridge between $V(G')$ and $V(C^*)$.

\begin{lemma}\label{lemma2}
All graphs in ${\cal C}$ satisfy (\ref{e1}) with equality.
Furthermore, for every vertex $u$ of every graph $G$ in ${\cal C}$,
the graph $G$ has a maximum dissociation set not containing $u$.
\end{lemma}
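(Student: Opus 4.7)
The plan is to prove both statements of the lemma jointly, by structural induction on the construction of $G\in{\cal C}$. The avoidance property is needed to make the induction carry: in the step corresponding to any of the four operations $(O_1)$--$(O_4)$, which attaches a new piece to a smaller graph $G'\in{\cal C}$ by a bridge $xy$, one combines a maximum dissociation set $D'$ of $G'$ not containing $x$ (existing by the inductive hypothesis) with a maximum dissociation set $D''$ of the attached component $C'\in\{P_3,C_\ell,C^*\}$, where $D''$ is chosen, again by induction, to avoid the bridge endpoint $y$ whenever that is necessary. A direct per-operation calculation shows that the right-hand side of (\ref{e1}) increases by exactly ${\rm diss}(C')$ when the operation is applied, so this yields the equality; the matching upper bound ${\rm diss}(G)\leq{\rm diss}(G')+{\rm diss}(C')$ follows because $xy$ is a cut edge, so the restriction of any dissociation set of $G$ to either side remains a dissociation set of that side. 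For the avoidance property of $G$, a given target vertex $u$ lies either in $V(G')$ or in the attached piece; in each subcase one assembles the two sides using the inductive hypothesis with the appropriate avoidances.

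It remains to establish the lemma for the graphs in ${\cal C}_0$. The case $G=P_3$ is immediate, and $G=C_\ell$ with $\ell\not\equiv 0\,{\rm mod}\, 3$ follows from a direct computation of ${\rm diss}(C_\ell)=\lfloor 2\ell/3\rfloor$ together with the dihedral symmetry of $C_\ell$ for the avoidance property. The core technical case is the very good spiked cycle $C^*$. Writing $a_j=i_{j+1}-i_j$ for $j<k$ and $a_k=\ell+i_1-i_k$, the very-good conditions give $a_j\equiv 2\,{\rm mod}\, 3$ for $j<k$, $a_k\equiv 1\,{\rm mod}\, 3$, and $\ell+k\equiv 2\,{\rm mod}\, 3$, so the right-hand side of (\ref{e1}) evaluates to $(2(\ell+k)-1)/3$. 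Setting $a_j=3t_j+2$ for $j<k$ and $a_k=3t_k+1$, the lower bound is realised by taking the $k$ spike vertices together with, on each arc $j$, a maximum dissociation set of the internal sub-path of length $a_j-1$ (contributing $2t_j+1$ vertices for $j<k$ and $2t_k$ for $j=k$); summing gives exactly $(2(\ell+k)-1)/3$. The matching upper bound is established by an arc-by-arc accounting in which one may assume without loss of generality that every spike vertex lies in the dissociation set.

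The main obstacle is the avoidance property for $C^*$. For spike-bearing cycle vertices $u_{i_j}$ the extremal construction above already avoids $u_{i_j}$, and for internal cycle vertices well inside a single arc one simply shifts the local pattern. The difficulty lies in avoiding either a spike vertex $v_{i_j}$ or a ``forced'' cycle vertex $u_{i_j\pm 1}$ for $j<k$: within a single arc of length $\equiv 2\,{\rm mod}\, 3$ the endpoints of the internal sub-path are selected in every local maximum, so avoidance cannot be achieved arc-locally and requires a global rearrangement. The typical move is to swap the spike $v_{i_j}$ for its base $u_{i_j}$, which may propagate into the swap of a neighbouring spike and a shift of the patterns on several arcs while keeping the total at $(2(\ell+k)-1)/3$. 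The combined very-good conditions, and in particular $\ell\not\equiv 1\,{\rm mod}\, 3$ (which forces $k\geq 2$), are precisely what ensures that such propagation always closes up and delivers the desired maximum dissociation set avoiding the prescribed vertex.
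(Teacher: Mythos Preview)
Your inductive step---combining a maximum dissociation set of $G'$ avoiding the bridge endpoint with one of the attached piece avoiding its bridge endpoint, together with the cut-edge upper bound ${\rm diss}(G)\le{\rm diss}(G')+{\rm diss}(C')$---is correct and is exactly what the paper does (the paper phrases it tersely, pointing to the argument of Lemma~\ref{lemma0}).

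The substantive difference is in the base case for very good spiked cycles. The paper does not analyse $C^*$ here at all: it simply cites Lemma~\ref{lemma1}, which already gives ${\rm diss}(C^*)=\frac{2n-1}{3}$ together with the avoidance property for good spiked cycles. That lemma in turn is proved by deleting a spike and its base, reducing to a tree, and invoking Theorem~\ref{theorem2} and Lemma~\ref{lemma0}; the one delicate case---avoiding a spike vertex $v_{i_j}$---is dispatched by noting that $C^*-v_{i_j}$ is again a spiked cycle of order $n-1\equiv 1\bmod 3$ and reapplying the lower bound of Lemma~\ref{lemma1} itself. Your route is a direct arc-by-arc argument. The explicit construction for the lower bound and the ``all spikes in $D$ without loss of generality'' reduction for the upper bound are fine. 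But your avoidance argument is only a sketch: the assertion that the swap-and-propagate procedure ``always closes up'' thanks to the very-good residues is stated, not verified, and this is precisely where the real content sits. If you want a self-contained argument bypassing Lemma~\ref{lemma1}, you must actually carry out that case analysis; the paper's device of deleting the targeted spike vertex and applying the lower bound to the remaining spiked cycle is both shorter and more robust than a propagation argument.
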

As our final result, we show that ${\cal C}$ 
contains all connected cycle-disjoint extremal graphs for Theorem \ref{theorem1}.
Figure \ref{figex} shows two extremal graphs that are not cycle-disjoint.

\begin{theorem}\label{theorem3}
A connected cycle-disjoint graph 
satisfies (\ref{e1}) with equality if and only if it belongs to ${\cal C}$.
\end{theorem}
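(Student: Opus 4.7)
The plan is to prove the ``only if'' direction by strong induction on $|V(G)|$, the ``if'' direction being Lemma \ref{lemma2}. Let $G$ be connected, cycle-disjoint, and satisfying (\ref{e1}) with equality. If $G$ is $2$-connected, then $G$ is itself a single block; since $G$ is cycle-disjoint, $G$ is either $K_2$ or a cycle $C_\ell$, and integrality of the right-hand side of (\ref{e1}) together with extremality forces $\ell \not\equiv 0 \bmod 3$, placing $G \in \mathcal{C}_0$.

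Otherwise $G$ has at least one cut-vertex; I would pick a leaf block $B$ of $G$, meeting the rest at its unique cut-vertex $r$. Since $G$ is cycle-disjoint, $B$ is either $K_2$ or a cycle. Based on the local structure near $r$, I would then exhibit a pendant end structure $H \subset G$ attached to the rest by a single bridge, of the form matching the inverse of one of the four operations:
\begin{itemize}
\item a $P_3$ attached by a bridge at an end-vertex (reverse of $(O_1)$),
\item a $P_3$ attached by a bridge at its central vertex (reverse of $(O_2)$),
\item a cycle $C_\ell$ with $\ell \not\equiv 0 \bmod 3$ attached by a bridge at a vertex (reverse of $(O_3)$),
\item a very good spiked cycle attached by a bridge at one of its vertices (reverse of $(O_4)$).
\end{itemize}
Setting $G' = G - V(H)$, I would verify that (i) $G'$ is connected and cycle-disjoint, (ii) $G'$ also satisfies (\ref{e1}) with equality, so by induction $G' \in \mathcal{C}$, and (iii) $G$ arises from $G'$ via the corresponding operation, so $G \in \mathcal{C}$.

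The crux of step (ii) is to establish ${\rm diss}(G) = {\rm diss}(G') + {\rm diss}(H)$. Lemma \ref{lemma2}, applied inductively to $G'$ and directly to $H \in \mathcal{C}_0$, provides maximum dissociation sets of each that avoid the respective endpoints of the bridge; their union is then a dissociation set of $G$ of size ${\rm diss}(G') + {\rm diss}(H)$. Combined with the equality assumption for $G$ and the known equality for $H$, this forces $G'$ to satisfy (\ref{e1}) with equality as well, enabling the inductive step.

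The main obstacle is verifying that a suitable pendant end structure always exists. The straightforward subcases are when $B = K_2$ (giving a $P_3$- or cherry-shaped end after inspecting the neighborhood of the cut-vertex) or when $B$ is a cycle attached to the rest by a single bridge (giving the $(O_3)$-end). The delicate subcase is when $B$ is a cycle attached by identification at $r$ and $r$ has further branches in $G$. Here I would first peel off any pendant path of length at least two at a vertex of $B$ via a reverse of $(O_1)$ or $(O_2)$; the configuration remaining at $B$ is then a cycle with single-vertex spikes, and extremality together with the cycle-disjoint hypothesis would force this to be precisely a very good spiked cycle attached to the rest by a single bridge, so that $(O_4)$ applies. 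Showing that any deviation from the ``very good'' spike pattern $2, \ldots, 2, 1 \bmod 3$, or any violation of $\ell \not\equiv 1 \bmod 3$, would allow a dissociation set of size strictly greater than $n - \tfrac{1}{3}(m + k + c_1)$, thereby contradicting extremality, involves careful modular bookkeeping along the arcs of $B$ and constitutes the technical heart of the argument.
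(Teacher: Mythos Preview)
Your overall strategy---induction on $|V(G)|$, peel off a subgraph from $\mathcal{C}_0$ attached by a bridge, verify the remainder is extremal and apply induction---matches the paper's. But there are two issues, one minor and one substantial.

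\textbf{Minor (circularity in step~(ii)).} You invoke Lemma~\ref{lemma2} on $G'$ to get a maximum dissociation set of $G'$ avoiding the bridge endpoint, but at that stage you do not yet know $G'\in\mathcal{C}$. This is easily repaired: avoidance is only needed on the $H$ side (since $H\in\mathcal{C}_0$), so take \emph{any} maximum dissociation set of $G'$ together with a maximum dissociation set of $H$ avoiding its bridge endpoint; their union is a dissociation set of $G$, yielding ${\rm diss}(G)\geq{\rm diss}(G')+{\rm diss}(H)$, which combined with Theorem~\ref{theorem1} for $G'$ and equality for $G$ forces $G'$ extremal.

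\textbf{Substantial (the spiked-cycle case).} Your handling of the ``delicate subcase'' is structurally off. If $B$ is a leaf block that is a cycle, then the \emph{only} vertex of $B$ with attachments outside $B$ is the cut-vertex $r$. Hence ``peeling pendant paths at vertices of $B$'' can only act at $r$, and what remains at $B$ is a cycle with all spikes concentrated at the single vertex $r$---never a spiked cycle in the paper's sense (which requires spikes at distinct cycle vertices $i_1<\cdots<i_k$). So reversing $(O_4)$ cannot arise the way you describe. Moreover, even your ``straightforward'' $K_2$ case is not automatic: a pendant edge $uv$ with $d_G(v)=2$ and $d_G(w)>2$ (where $w$ is $v$'s other neighbour) matches neither $(O_1)$ nor $(O_2)$, and ruling this out already requires an extremality argument.

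The paper resolves all of this by choosing not just any leaf block but a \emph{longest path} $B\,v\,B'\ldots$ in the block--cutvertex tree. This guarantees that every block at $v$ other than $B'$ is an endblock, so the local structure is tightly constrained. The spiked cycle then emerges not from $B$ but from $B'$: when $B$ is a $K_2$ and $B'$ is a cycle, maximality of $P$ forces all blocks hanging off $B'$ (except the continuation of $P$) to be endblock $K_2$'s, and $B'$ together with these pendant edges is exactly a spiked cycle $G_0$. The extremality inequalities then force $G_0$ to be very good. Your proposal is missing this mechanism, and without it the reverse-$(O_4)$ step cannot be carried out.
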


\begin{figure}[H]
\begin{center}
%TeXCAD Picture [5.pic]. Options:
%\grade{\on}
%\emlines{\off}
%\epic{\off}
%\beziermacro{\on}
%\reduce{\on}
%\snapping{\on}
%\pvinsert{% Your \input, \def, etc. here}
%\quality{8.000}
%\graddiff{0.005}
%\snapasp{1}
%\zoom{11.3137}
\unitlength 0.6mm % = 2.845pt
\linethickness{0.4pt}
\ifx\plotpoint\undefined\newsavebox{\plotpoint}\fi % GNUPLOT compatibility
\begin{picture}(36,46)(0,0)
\put(5,5){\circle*{2}}
\put(35,5){\circle*{2}}
\put(5,25){\circle*{2}}
\put(35,25){\circle*{2}}
\put(5,45){\circle*{2}}
\put(35,45){\circle*{2}}
\put(20,35){\circle*{2}}
\put(5,45){\line(3,-2){15}}
\put(20,35){\line(3,2){15}}
\put(35,45){\line(0,-1){20}}
\put(35,25){\line(-3,2){15}}
\put(20,35){\line(-3,-2){15}}
\put(5,25){\line(0,1){20}}
\put(5,25){\line(0,-1){20}}
\put(35,25){\line(0,-1){20}}
\put(20,29){\makebox(0,0)[cc]{$u$}}
\end{picture}\hspace{2cm}
% This is a LaTeX picture output by TeXCAD.
% File name: [6.pic].
% Version of TeXCAD: 4.51
% Reference / build: 27-Nov-2018 (rev. a75)
% For new versions, check: http://texcad.sf.net/
% Options on the following lines.
%\grade{\on}
%\emlines{\off}
%\epic{\off}
%\beziermacro{\on}
%\reduce{\on}
%\snapping{\on}
%\pvinsert{% Your \input, \def, etc. here}
%\quality{8.000}
%\graddiff{0.005}
%\snapasp{1}
%\zoom{11.3137}
\unitlength 0.6mm % = 2.845pt
\linethickness{0.4pt}
\ifx\plotpoint\undefined\newsavebox{\plotpoint}\fi % GNUPLOT compatibility
\begin{picture}(31,46)(0,0)
\put(15,25){\circle*{2}}
\put(15,45){\circle*{2}}
\put(0,35){\circle*{2}}
\put(30,35){\circle*{2}}
\put(15,45){\line(3,-2){15}}
\put(0,35){\line(3,2){15}}
\put(15,45){\line(0,-1){20}}
\put(15,25){\line(-3,2){15}}
\put(30,35){\line(-3,-2){15}}
\put(15,20){\makebox(0,0)[cc]{$u$}}
\end{picture}
\end{center}
\caption{Two graphs $G$ that satisfy (\ref{e1}) with equality. 
Note that the removal of the vertex $u$, which lies on two cycles, 
yields $d_G(u)-2$ components.}
\label{figex}
\end{figure}
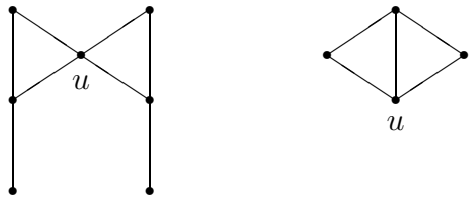
All proofs are given in the next section.

\section{Proofs}

\begin{proof}[Proof of Theorem \ref{theorem1}]
We prove the statement by contradiction, and 
suppose that $G$ is a counterexample of minimum order.
Clearly, this implies that $G$ is connected, that is, we have $k=1$.

If $G$ is not cycle-disjoint, 
then there is a vertex $u$ of $G$ such that 
$G'=G-u$ has $k'\leq d_G(u)-2$ components, and 
the choice of $G$ implies the contradiction
\begin{eqnarray*}
{\rm diss}(G) 
&\geq& {\rm diss}(G')
\geq (n-1)-\frac{1}{3}\Big((m-d_G(u))+k'+c_1(G')\Big)
\geq n-\frac{1}{3}\Big(m+1+c_1\Big),
\end{eqnarray*}
where $c_1(G')$ denotes the number of induced cycles of length $1$ modulo $3$ in $G'$,
and we used the obvious fact that $c_1(G')\leq c_1$.
Hence, the graph $G$ is cycle-disjoint.

Using the bound for trees in (\ref{e0}), and 
${\rm diss}(C_\ell)=\left\lfloor\frac{2\ell}{3}\right\rfloor$ for every integer $\ell\geq 3$,
it follows easily that $G$ is neither a tree nor a cycle.
We consider a longest path $P$, say $P:BvB'\ldots$, 
in the block-cutvertex tree \cite{ba} of $G$,
that is, $B$ and $B'$ are distinct blocks of $G$, 
$v$ is a cutvertex of $G$ that belongs to $B$ and $B'$, and 
all blocks of $G$ that contain $v$ --- except for possibly the block $B'$ --- are endblocks.
Let ${\cal B}$ be the set of all blocks of $G$ 
that contain $v$ and are distinct from $B'$.
Let ${\cal B}$ contain $p$ blocks that are $K_2$s, 
and $q$ blocks that are cycles.
Since $G$ is cycle-disjoint, we have $q\in \{ 0,1\}$.
The graph $G'=G-\bigcup\limits_{H\in {\cal B}}V(H)$ is connected and cycle-disjoint.
Since $B'$ is a $K_2$ or a cycle, 
the number $d$ of neighbors of $v$ in $V(G')$ is $1$ or $2$.
Note that $c_1(G')\leq c_1$, and $c_1(G')\leq c_1-1$ 
if ${\cal B}$ contains a cycle of length $1$ modulo $3$.
See Figure \ref{figbb} for an illustration.

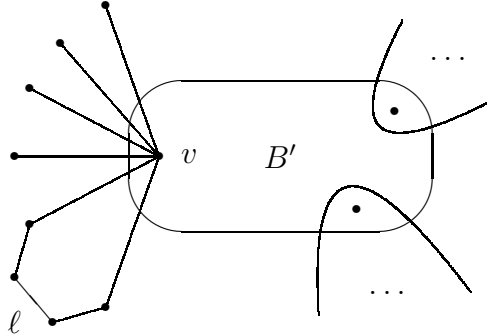
\begin{figure}[H]
\begin{center}
%TeXCAD Picture [bb'.pic]. Options:
%\grade{\on}
%\emlines{\off}
%\epic{\off}
%\beziermacro{\on}
%\reduce{\on}
%\snapping{\on}
%\pvinsert{% Your \input, \def, etc. here}
%\quality{8.000}
%\graddiff{0.005}
%\snapasp{1}
%\zoom{13.4543}
\unitlength 1mm % = 2.845pt
\linethickness{0.4pt}
\ifx\plotpoint\undefined\newsavebox{\plotpoint}\fi % GNUPLOT compatibility
\begin{picture}(68,43.5)(0,0)
\put(41,23){\oval(40,20)[]}
\qbezier(57,41)(45.5,18.5)(68,30)
\qbezier(66,5)(44,34.5)(46,2)
\put(56,29){\circle*{1}}
\put(51,16){\circle*{1}}
\put(63,36){\makebox(0,0)[cc]{$\ldots$}}
\put(55,5){\makebox(0,0)[cc]{$\ldots$}}
\put(25,23){\circle*{1}}
\put(29,23){\makebox(0,0)[cc]{$v$}}
\put(41,23){\makebox(0,0)[cc]{$B'$}}
\put(18,43){\circle*{1}}
\put(18,3){\circle*{1}}
\put(12,38){\circle*{1}}
\put(8,32){\circle*{1}}
\put(8,14){\circle*{1}}
\put(6,23){\circle*{1}}
%\emline(18,43)(25,23)
\multiput(18,43)(.033653846,-.096153846){208}{\line(0,-1){.096153846}}
%\end
%\emline(25,23)(12,38)
\multiput(25,23)(-.0336787565,.0388601036){386}{\line(0,1){.0388601036}}
%\end
%\emline(8,32)(25,23)
\multiput(8,32)(.063670412,-.0337078652){267}{\line(1,0){.063670412}}
%\end
%\emline(8,14)(25,23)
\multiput(8,14)(.063670412,.0337078652){267}{\line(1,0){.063670412}}
%\end
%\emline(25,23)(18,3)
\multiput(25,23)(-.033653846,-.096153846){208}{\line(0,-1){.096153846}}
%\end
\put(11,1){\circle*{1}}
\put(6,7){\circle*{1}}
%\emline(11,1)(18,3)
\multiput(11,1)(.11666667,.03333333){60}{\line(1,0){.11666667}}
%\end
\put(6,1){\makebox(0,0)[cc]{$\ell$}}
%\emline(8,14)(6,7)
\multiput(8,14)(-.03333333,-.11666667){60}{\line(0,-1){.11666667}}
%\end
\put(6,7){\line(5,-6){5}}
\put(25,23){\line(-1,0){19}}
\end{picture}
\end{center}
\caption{The local configuration within $G$ where $p=4$ and $q=1$.}\label{figbb}
\end{figure}

First, suppose that $q=1$,
that is, one block in ${\cal B}$ is a cycle $C_\ell$.
Since $G$ is cycle-disjoint, we obtain that $B'$ is a $K_2$, and, hence, $d=1$.
Since $C_\ell$ has a maximum dissociation set avoiding $v$,
the choice of $G$ implies the contradiction
\begin{eqnarray}
{\rm diss}(G) 
&\geq & {\rm diss}(G')+p+\left\lfloor\frac{2\ell}{3}\right\rfloor\label{ex0}\\
&\geq & \left(\underbrace{n-p-\ell}_{n(G')}\right)
-\frac{1}{3}\left(\underbrace{\left(m-d-p-\ell\right)}_{m(G')}+1+c_1(G')\right)
+p+\left\lfloor\frac{2\ell}{3}\right\rfloor\label{ex1}\\
&\geq & \Big(n-\ell\Big)
-\frac{1}{3}\Big(\left(m-1-\ell\right)+1+c_1(G')\Big)
+\left\lfloor\frac{2\ell}{3}\right\rfloor\label{ex2}\\
&\geq & n-\frac{1}{3}\Big(m+1+c_1\Big),\label{ex3}
\end{eqnarray}
where the final inequality uses 
\begin{eqnarray}
-\ell+\frac{\ell+1}{3}-\frac{c_1(G')}{3}+\left\lfloor\frac{2\ell}{3}\right\rfloor
\geq -\frac{c_1}{3},\label{ex4}
\end{eqnarray}
which follows from the relation between $c_1(G')$ and $c_1$ mentioned above.
Hence, no block in ${\cal B}$ is a cycle.

If 
either $p\geq 2$ 
or $p=1$ and $B'$ is a cycle, then $m(G')\leq m-3$,
and, the choice of $G$ implies the contradiction
\begin{eqnarray*}
{\rm diss}(G) 
&\geq & {\rm diss}(G')+p
\geq \Big(n-1-p\Big)
-\frac{1}{3}\Big(m(G')+1+c_1(G')\Big)
+p
\geq n-\frac{1}{3}\Big(m+1+c_1\Big).
\end{eqnarray*}
Hence, we obtain that $p=1$ and that $B'$ is a $K_2$,
which implies that $v$ has degree $2$.
Let $w$ be the unique neighbor of $v$ that is not an endvertex.
The graph $G''=G-N_G[v]=G'-w$ has $k\leq d_G(w)-1$ components, 
and $G''$ has $c_1(G'')\leq c_1$ induced cycles of length $1$ modulo $3$.
Since 
$$m(G'')+k\leq (m-d_G(w)-1)+(d_G(w)-1)=m+1-3,$$ 
the choice of $G$ implies the contradiction
\begin{eqnarray*}
{\rm diss}(G) 
&\geq & {\rm diss}(G'')+2
\geq \Big(n-3\Big)
-\frac{1}{3}\Big(m(G'')+k+c_1(G'')\Big)+2
\geq n-\frac{1}{3}\Big(m+1+c_1\Big),
\end{eqnarray*}
which completes the proof.
\end{proof}
Applied to a subcubic graph,
the first reduction considered in the proof of Theorem \ref{theorem1}
corresponds to the removal of a vertex of degree $3$ that is not a cutvertex.
Repeatedly applying this reduction,
the set of removed vertices is a {\it nonseparating independent set};
a notion that is relevant in the context of feedback vertex sets 
of subcubic graphs \cite{sp,uekago}.

\begin{proof}[Proof of Theorem \ref{theorem2}]
(b) $\Rightarrow$ (a): 
Clearly, $P_3$ satisfies (a).
If $T$ arises from a tree $T'$ that satisfies (a) by applying operation $(O_1)$,
then some maximum dissociation set of $T$ consists of $u$, $v$, and some 
maximum dissociation set of $T'$, which implies that $T$ satisfies (a).
Similarly,
if $T$ arises from a tree $T'$ that satisfies (a) by applying operation $(O_2)$,
then some maximum dissociation set of $T$ consists of $u$, $u'$, and some 
maximum dissociation set of $T'$, which implies that $T$ satisfies (a).
A simple inductive argument implies that all trees in ${\cal T}$ satisfy (a).

\medskip

\noindent (a) $\Rightarrow$ (c): Let $T$ satisfy (a).
By induction on the order $n$ of $T$, we prove (c).
Since $P_3$ is the only star that satisfies (a) and $P_3$ satisfies (c), 
we may assume that $n\geq 4$ and that $T$ has diameter at least $3$.
Let $P:uvwx\ldots$ be a longest path in $T$.
Since 
$$\frac{2n}{3}
={\rm diss}(T)
\geq |N_T(v)\setminus \{ w\}|+{\rm diss}\Big(T-(N_T[v]\setminus \{ w\})\Big)
\stackrel{(\ref{e0})}{\geq} (d_T(v)-1)+\frac{2}{3}(n-d_T(v)),$$
we obtain $d_T(v)\in \{ 2,3\}$.

First, suppose that $d_T(v)=2$.
Let $T_1,\ldots,T_p$ be the components of $T-\{ u,v,w\}$,
and let $n_i$ be the order of $T_i$.
Since 
$$\frac{2n}{3}
={\rm diss}(T)
\geq |\{ u,v\}|+\sum_{i=1}^p{\rm diss}(T_i)
\stackrel{(\ref{e0})}{\geq} 2+\sum_{i=1}^p\frac{2n_i}{3}
=\frac{2n}{3},$$
equality holds throughout this inequality chain,
which implies that each $T_i$ satisfies (a).
By induction, each $T_i$ satisfies (c).
Now, let $y$ be any vertex of $T$.
If $d_T(y)\leq 2$, then $T-y$ has at most two components.
Now, let $d_T(y)\geq 3$.
If $y\in V(T_j)$, then the order of the component of $T-y$
that contains $w$ is 
either $3+\sum_{i\not=j}n_i$ if $y$ is the neighbor of $w$ in $V(T_i)$
or $n(K)+3+\sum_{i\not=j}n_i$, where $K$ is the component of $T_i-y$
that contains the neighbor of $w$ in $V(T_i)$.
Since each $n_i$ is $0$ modulo $3$,
the term $3+\sum_{i\not=j}n_i$ is $0$ modulo $3$.
Since $T_i-y$ has at most two components of order not $0$ modulo $3$,
this implies that also $T-y$ has at most two components of order not $0$ modulo $3$.
Finally, if $y=w$, then the only component of $T-y$
of order not $0$ modulo $3$ consists of $u$ and $v$.
Altogether, we obtain that $T$ satisfies (c).

Next, suppose that $d_T(v)=3$.
Since 
$$\frac{2n}{3}
={\rm diss}(T)
\geq |N_T(v)\setminus \{ w\}|+{\rm diss}\Big(T-(N_T[v]\setminus \{ w\})\Big)
\stackrel{(\ref{e0})}{\geq} 2+\frac{2(n-3)}{3}
=\frac{2n}{3},$$
the tree $T-(N_T[v]\setminus \{ w\})$ satisfies (a), and, 
hence, by induction, also (c).
Arguing similarly as above, it follows easily that $T$ satisfies (c).

\medskip

\noindent (c) $\Rightarrow$ (b): Let $T$ satisfy (c).
By induction on the order $n$ of $T$, we prove (b).
Since $P_3$ is the only star that satisfies (c) and $P_3$ satisfies (b), 
we may assume that $n\geq 4$ and that $T$ has diameter at least $3$.
Let $v$ be a vertex of degree at least $2$ 
such that all but exactly one neighbor $w$ of $v$ are endvertices.
Since $T-v$ has $d_T(v)-1$ components of order $1$, 
we obtain, by (c), that $d_T(v)\in \{ 2,3\}$.
If $d_T(v)=3$, 
then it is easy to see that $T'=T-(N_T[v]\setminus \{ w\})$ satisfies (c), and,
hence, by induction, also (b).
Since $T$ arises from $T'$ by applying operation $(O_2)$,
it follows in this case that $T$ satisfies (b).
By symmetry, we may assume that every vertex $v$ of degree at least $2$,
such that all but exactly one neighbor of $v$ are endvertices, has degree $2$.
Let $P:uvwx\ldots$ be a longest path in $T$, and 
let $T''=T-\{ u,v,w\}$.
Since $T$ satisfies (c), 
it follows easily that each component of $T''$ satisfies (c), and, 
hence, by induction, also (b).
If $T''$ is connected, then $T$ arises from $T''$ by applying operation $(O_1)$,
and it follows that $T$ satisfies (b).
Hence, we may assume that $T''$ has at least two components.
By the choice of $P$, 
this implies that in $T$ all vertices of some component $K$ of $T''$ 
are within distance at most $2$ from $w$.
Since $n(K)\geq 3$,
the neighbor $v'$ of $w$ in $V(K)$ is of degree at least $3$,
and all but exactly one neighbor of $v'$ are endvertices, 
which is a contradiction and completes the proof.
\end{proof}
The trees in ${\cal T}$ have the following useful property.

\begin{lemma}\label{lemma0}
For every vertex $u$ of every tree $T$ of order $n$ with ${\rm diss}(T)=2n/3$,
the tree $T$ has a maximum dissociation set not containing $u$.
\end{lemma}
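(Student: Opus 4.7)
The plan is to proceed by induction on $n$, using Theorem~\ref{theorem2} to identify the trees $T$ of order $n$ with ${\rm diss}(T) = 2n/3$ as precisely the members of $\mathcal{T}$, and then inducting on the construction of $T$ from $P_3$ via the operations $(O_1)$ and $(O_2)$. The base case $T = P_3$ is immediate, since for each of the three vertices the other two form a maximum dissociation set. For the inductive step, both $(O_1)$ and $(O_2)$ add exactly three vertices, so $|V(T)| = |V(T')| + 3$ and hence ${\rm diss}(T) = {\rm diss}(T') + 2$; thus it suffices to exhibit, for each vertex $x$ of $T$, a dissociation set of size ${\rm diss}(T') + 2$ avoiding $x$. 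Throughout, let $y \in V(T')$ denote the endpoint in $T'$ of the bridge added by the operation.

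In case $(O_1)$, the new vertices form a path $u$--$v$--$w$ with $w$ adjacent to $y$. For $x \in V(T') \cup \{w\}$, I would take a maximum dissociation set $D'$ of $T'$ avoiding $x$ (any $D'$ when $x = w$, the induction hypothesis when $x \in V(T')$) and form $D' \cup \{u, v\}$; since $w$ lies in neither $D'$ nor $\{u,v\}$, the vertex $v$ has exactly one neighbour in the set. For $x \in \{u, v\}$, I would invoke the induction hypothesis on $T'$ to obtain $D'$ avoiding $y$, and then form $D' \cup \{v, w\}$ or $D' \cup \{u, w\}$ respectively; because $y \notin D'$, the vertex $w$ has at most one neighbour in the set.

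In case $(O_2)$, the new structure consists of a vertex $v$ adjacent to $y$ together with two pendant leaves $u, u'$ at $v$. For $x \in V(T') \cup \{v\}$, I would form $D' \cup \{u, u'\}$ with $D'$ any maximum dissociation set of $T'$ avoiding $x$ (or any $D'$ when $x = v$); since $v \notin V(T')$ lies outside the set, no degree constraint at $v$ is triggered. For $x \in \{u, u'\}$, I would take $D'$ avoiding $y$ by induction and form $D' \cup \{v, u'\}$ or $D' \cup \{v, u\}$ respectively; since $y \notin D'$, the vertex $v$ has exactly one neighbour in the set, namely the remaining leaf.

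The only subtlety, and the main thing the argument relies on, is recognising when to invoke the induction hypothesis to avoid $y$ rather than $x$: this occurs precisely when the constructed dissociation set must contain the bridge-adjacent new vertex (namely $w$ in $(O_1)$ or $v$ in $(O_2)$), so that its degree into $T'$ must be suppressed. This is exactly the property that Lemma~\ref{lemma0} supplies by induction for $T'$, so no further obstacle appears beyond bookkeeping the handful of cases above.
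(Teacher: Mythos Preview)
Your proof is correct and follows essentially the same approach as the paper's: induction on $n$ via Theorem~\ref{theorem2}, with the key step being to invoke the induction hypothesis on $T'$ to avoid the bridge endpoint in $T'$ whenever the constructed dissociation set must include the bridge-adjacent new vertex. The paper's version is slightly more compact in that it treats $(O_1)$ and $(O_2)$ uniformly as ``attach a $P_3$ by a bridge to $T'$'' and, when the vertex to avoid lies in the new $P_3$, simply takes the other two $P_3$-vertices together with a maximum dissociation set of $T'$ avoiding the bridge endpoint in $T'$; your case split is just a more explicit rendering of the same argument.
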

\begin{proof}
The proof is by induction on $n$.
For $n=3$, the statement is obvious.
Now, let $n>3$.
By Theorem \ref{theorem2},
the tree $T$ arises from the disjoint union of a tree $T'$ of order $n'$ 
with ${\rm diss}(T)=2n'/3$ and a copy of $P_3$
by adding a bridge between some vertex $x$ in $T'$
and some vertex $y$ in the $P_3$.
Now, let $u$ be any vertex of $T$.
If $u$ is a vertex of $T'$, 
then adding the two vertices of the $P_3$ that are distinct from $y$
to a maximum dissociation set of $T'$ not containing $u$
yields a maximum dissociation set of $T$ not containing $u$.
If $u$ is a vertex of the $P_3$,
then adding the two vertices of the $P_3$ that are distinct from $u$
to a maximum dissociation set of $T'$ not containing $x$
yields a maximum dissociation set of $T$ not containing $u$.
\end{proof}

\begin{lemma}\label{lemma1}
If $C^*$ is a spiked cycle with $k$ spikes of order $n$,
then ${\rm diss}(C^*)\geq \frac{2n-1}{3}$
with equality if and only if $C^*$ is good.
Furthermore, if $C^*$ is good and $u$ is a vertex of $C^*$
such that, for $k=1$, the degree of $u$ is at least $2$,
then the good spiked cycle $C^*$ has a maximum dissociation set not containing $u$.
\end{lemma}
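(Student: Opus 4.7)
The plan is to prove all three assertions---the lower bound $(2n-1)/3$, the equality characterization, and the avoidance statement---together by induction on $n=\ell+k$, with separate arguments for $k=1$ and $k\geq 2$. For the lower bound, Theorem~\ref{theorem1} gives ${\rm diss}(C^*)\geq n-\tfrac{1}{3}(m+1+c_1)=(2n-1)/3$ immediately whenever $\ell\not\equiv 1\,{\rm mod}\,3$ (so $c_1(C^*)=0$). When $\ell\equiv 1\,{\rm mod}\,3$, Theorem~\ref{theorem1} yields only $(2n-2)/3$, but in all residue classes $(\ell,k)$ other than those compatible with goodness, $(2n-1)/3$ is not an integer and the integrality of ${\rm diss}(C^*)$ closes the $1/3$ gap automatically; in the remaining residue classes I would construct a dissociation set of size $(2n-1)/3$ explicitly. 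For $k=1$ take $v_{i_1},u_{i_1}$ together with a maximum dissociation set of $P_{\ell-3}$ obtained by removing $u_{i_1}$ and its two cycle neighbours; for $k\geq 2$ take every pendant $v_{i_j}$, every spike vertex $u_{i_j}$ with $j<k$, and an optimal dissociation set of the internal path of each arc, where the residues $\ell_j\equiv 2$ for $j<k$ and $\ell_k\equiv 1$ modulo $3$ make the arithmetic collapse exactly to $(2n-1)/3$.

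For the matching upper bound in the good case I would induct on $n$. The case $k=1$ follows from ${\rm diss}(C^*)\leq {\rm diss}(C_\ell)+1=\lfloor 2\ell/3\rfloor+1=(2\ell+1)/3=(2n-1)/3$. The base for $k\geq 2$ is the minimal-arc configuration $\ell_1=\cdots=\ell_{k-1}=2,\ell_k=1,n=3k-1$, for which ${\rm diss}(C^*)\leq 2k-1$ follows by a short direct argument exploiting the alternating cycle structure. For the inductive step I would pick an arc of length at least three above its minimum and contract a $P_3$ inside it: delete three consecutive internal vertices $u_a,u_{a+1},u_{a+2}$ and insert the edge $u_{a-1}u_{a+3}$, obtaining a good spiked cycle $C^{**}$ with $n-3$ vertices and ${\rm diss}(C^{**})=(2n-7)/3$ by induction. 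I would then prove ${\rm diss}(C^*)={\rm diss}(C^{**})+2$ in two halves: the lower inequality by extending any maximum $D^{**}$ in $C^{**}$ to $D^{**}\cup\{u_a,u_{a+2}\}$ and checking the degree constraint at $u_{a-1},u_a,u_{a+2},u_{a+3}$ through a short case analysis on whether $u_{a-1},u_{a+3}\in D^{**}$; the upper inequality by restricting a maximum $D$ in $C^*$ to $V(C^{**})$, observing that the only possible violation of the dissociation condition in $C^{**}[D\cap V(C^{**})]$ occurs at an endpoint of the inserted edge $u_{a-1}u_{a+3}$---which forces $u_{a+1}\in D$---and then discarding one offending endpoint to repair the restriction while keeping $|D\cap\{u_a,u_{a+1},u_{a+2}\}|\leq 2$.

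The ``not good'' direction follows either automatically whenever $(2n-1)/3$ is non-integer (integrality plus Theorem~\ref{theorem1}) or, in the non-good configurations for which $(2n-1)/3$ is still an integer, by a direct construction yielding a dissociation set of size $(2n+2)/3$: harvest an extra vertex from an arc whose length has the ``wrong'' residue modulo $3$. The ``furthermore'' avoidance I would fold into the same induction: at the base cases I would verify by direct construction that for every admissible vertex $u$ some maximum dissociation set avoids $u$ (the hypothesis $d(u)\geq 2$ for $k=1$ precisely excludes the unique pendant, which by the counting above must lie in every maximum dissociation set when $k=1$), and at the inductive step I would arrange the contracted $P_3$ to lie in an arc avoiding $u$ (possible since $k\geq 2$ yields at least two arcs and $u$ belongs to at most one of them) before invoking the inductive avoidance in $C^{**}$ and extending. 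The main obstacle is the upper half of the inductive step: verifying that a post-restriction degree violation really forces $u_{a+1}\in D$ and that the repair removes exactly one vertex, so that the balance $|D|\leq {\rm diss}(C^{**})+2$ closes tightly.
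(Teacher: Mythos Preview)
Your route is quite different from the paper's. The paper never invokes Theorem~\ref{theorem1} or a contraction induction; instead it deletes a spike base together with its pendant, $\{u_{i_1},v_{i_1}\}$, obtaining a tree $T$ of order $n-2$, and then uses the tree bound from (\ref{e0}) together with the structural characterization of extremal trees in Theorem~\ref{theorem2} to handle both the inequality and the equality case uniformly. This single reduction gives ${\rm diss}(C^*)\geq{\rm diss}(T)+1\geq\tfrac{2(n-2)}{3}+1=(2n-1)/3$ with no residue-class split, and the ``good'' characterization then falls out of condition~(c) of Theorem~\ref{theorem2} applied to $T$.

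Your argument, as written, has a genuine gap in the lower bound. You assert that when $\ell\equiv 1\pmod 3$ (so $c_1=1$ and Theorem~\ref{theorem1} yields only $(2n-2)/3$), integrality of ${\rm diss}(C^*)$ closes the $1/3$ shortfall whenever $(2n-1)/3$ is not an integer. But integrality helps only if the \emph{bound} $(2n-2)/3$ is itself non-integral. In the residue class $\ell\equiv 1$, $k\equiv 0\pmod 3$ (hence $n=\ell+k\equiv 1\pmod 3$), the number $(2n-2)/3$ \emph{is} an integer, so nothing you have said excludes ${\rm diss}(C^*)=(2n-2)/3<(2n-1)/3$. Your explicit construction for $k\geq 2$ assumes the arc residues of a good spiked cycle and therefore does not cover this class either. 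The paper's tree-removal step avoids the problem entirely; you could import that single line to patch the hole.

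A minor point on the contraction step: your justification ``a violation forces $u_{a+1}\in D$'' is not correct. What a violation at $u_{a-1}$ actually forces is $u_{a+3}\in D$ and $u_a\notin D$ (since the only change in $u_{a-1}$'s neighborhood is trading $u_a$ for $u_{a+3}$); then the dissociation constraint at $u_{a+2}$, which is adjacent to both $u_{a+1}$ and $u_{a+3}$, yields $|D\cap\{u_{a+1},u_{a+2}\}|\leq 1$. With $u_a\notin D$ this gives $|D\cap\{u_a,u_{a+1},u_{a+2}\}|\leq 1$, and removing one endpoint of the inserted edge repairs both potential violations, so the balance ${\rm diss}(C^*)\leq{\rm diss}(C^{**})+2$ does close---just not for the reason you stated.
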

\begin{proof}
Since all statements are easily verified for $k=1$, we assume now that $k\geq 2$.
Let $C^*$ be a spiked cycle with $k$ spikes at $\{ i_1,\ldots,i_k\}$,
where we use the notation from the definition of spiked cycles.
The graph $T=C^*-\{u_{i_1},v_{i_1}\}$ is a tree of order $n-2$, and
we obtain that
\begin{eqnarray}\label{e2}
{\rm diss}(C^*)
&\geq& {\rm diss}(T)+|\{ v_{i_1}\}|
\stackrel{(\ref{e0})}{\geq} \frac{2(n-2)}{3}+1=\frac{2n-1}{3}.
\end{eqnarray}
Now, suppose that (\ref{e2}) holds with equality throughout.
This implies that $n\equiv 2\,{\rm mod}\,3$ and 
that ${\rm diss}(T)=\frac{2(n-2)}{3}$.
By Theorem \ref{theorem2}, the tree $T$ satisfies (c) of Theorem \ref{theorem2}.
A path in $C^*$ between vertices of degree $3$ 
whose internal vertices have degree $2$ is called {\it special}.
If $i_2-i_1\equiv 0\,{\rm mod}\,3$,
then $T-u_{i_2}$ has three components of order not $0$ modulo $3$,
which is a contradiction.
See Figure \ref{figi1i2} for an illustration.

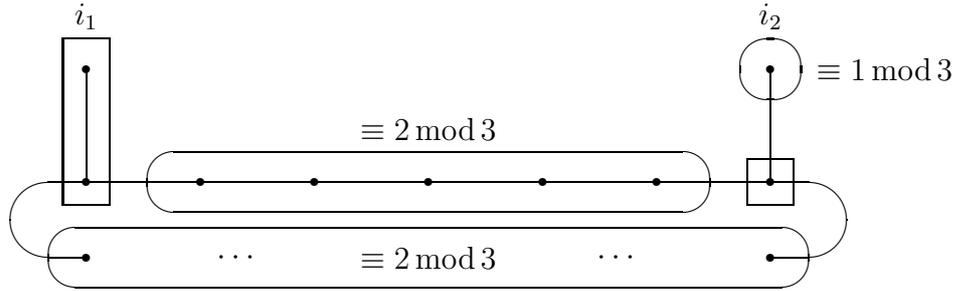
\begin{figure}[H]
\begin{center}
% This is a LaTeX picture output by TeXCAD.
% File name: [i1i2.pic].
% Version of TeXCAD: 4.51
% Reference / build: 27-Nov-2018 (rev. a75)
% For new versions, check: http://texcad.sf.net/
% Options on the following lines.
%\grade{\on}
%\emlines{\off}
%\epic{\off}
%\beziermacro{\on}
%\reduce{\on}
%\snapping{\on}
%\pvinsert{% Your \input, \def, etc. here}
%\quality{8.000}
%\graddiff{0.005}
%\snapasp{1}
%\zoom{13.4543}
\unitlength 1mm % = 2.845pt
\linethickness{0.4pt}
\ifx\plotpoint\undefined\newsavebox{\plotpoint}\fi % GNUPLOT compatibility
\begin{picture}(110,35)(0,0)
\put(10,13){\circle*{1}}
\put(10,28){\circle*{1}}
\put(25,13){\circle*{1}}
\put(40,13){\circle*{1}}
\put(55,13){\circle*{1}}
\put(70,13){\circle*{1}}
\put(85,13){\circle*{1}}
\put(100,13){\circle*{1}}
\put(100,28){\circle*{1}}
\put(10,13){\line(1,0){90}}
\put(10,28){\line(0,-1){15}}
\put(100,28){\line(0,-1){15}}
\put(9.5,8){\oval(19,10)[lb]}
\put(100.5,8){\oval(19,10)[rt]}
\put(9.5,8){\oval(19,10)[lt]}
\put(100.5,8){\oval(19,10)[rb]}
\put(55,3){\oval(100,8)[]}
\put(100,28){\oval(8,8)[]}
\put(55,13){\oval(74,8)[]}
\put(7,10){\framebox(6,22)[cc]{}}
\put(10,35){\makebox(0,0)[cc]{$i_1$}}
\put(100,35){\makebox(0,0)[cc]{$i_2$}}
\put(55,20){\makebox(0,0)[cc]{$\equiv 2\,{\rm mod}\,3$}}
\put(55,3){\makebox(0,0)[cc]{$\equiv 2\,{\rm mod}\,3$}}
\put(30,3){\makebox(0,0)[cc]{$\cdots$}}
\put(80,3){\makebox(0,0)[cc]{$\cdots$}}
\put(115,28){\makebox(0,0)[cc]{$\equiv 1\,{\rm mod}\,3$}}
\put(10,3){\circle*{1}}
\put(100,3){\circle*{1}}
\put(97,10){\framebox(6,6)[cc]{}}
\end{picture}
\end{center}
\caption{$T-u_{i_1}=C^*-\{ u_{i_1},v_{i_1},u_{i_2}\}$.}\label{figi1i2}
\end{figure}
Hence, by symmetry, no special path has length $0$ modulo $3$.
If $i_2-i_1,i_{j+1}-i_j\equiv 1\,{\rm mod}\,3$, and
$i_3-i_2,i_4-i_3,\ldots,i_j-i_{j-1}\equiv 2\,{\rm mod}\,3$
for some $j\in \{ 2,\ldots,k-1\}$,
then $T-u_{i_{j+1}}$ has three components of order not $0$ modulo $3$,
which is a contradiction.
Hence, by symmetry, at most one special path has length $1$ modulo $3$.
Since $n\equiv 2\,{\rm mod}\,3$, not all special paths have lengths $2$ modulo $3$.
Altogether, we obtain that exactly one special path has length $1$ modulo $3$
while the other $k-1$ special paths have lengths $2$ modulo $3$,
that is, the spiked cycle $C^*$ is good.

Next, suppose that the spiked cycle $C^*$ is good.
By symmetry, we may assume $i_2-i_1\equiv 1\,{\rm mod}\,3$.
This easily implies that some maximum dissociation set $D$ of $C^*$ 
does not contain both $u_{i_1}$ as well as $u_{i_2}$.
By symmetry, we may assume that $D$ does not contain $u_{i_1}$.
Again, let $T=C^*-\{u_{i_1},v_{i_1}\}$.
In view of $D$, we have ${\rm diss}(C^*)={\rm diss}(T)+1$.
Since $C^*$ is good, it is easy to see that $T$ satisfies (c) of Theorem \ref{theorem2}.
Hence, by Theorem \ref{theorem2}, we obtain
${\rm diss}(C^*)={\rm diss}(T)+1=\frac{2(n-2)}{3}+1=\frac{2n-1}{3}$.
By Lemma \ref{lemma0},
the tree $T$ has a maximum dissociation set avoiding any specified vertex.
This easily implies that $C^*$ has a maximum dissociation set 
avoiding any specified vertex distinct from $v_{i_1}$.
Since $\tilde{C}^*=C^*-v_{i_1}$ is a spiked cycle of order $n-1\equiv 1\,{\rm mod}\,3$,
we have ${\rm diss}\left(\tilde{C}^*\right)\geq \left\lceil\frac{2(n-1)-1}{3}\right\rceil=\frac{2n-1}{3}$, 
which implies that a maximum dissociation set of $\tilde{C}^*$
is a maximum dissociation set of $C^*$ avoiding $v_{i_1}$.
This completes the proof.
\end{proof}

\begin{proof}[Proof of Lemma \ref{lemma2}]
By Lemma \ref{lemma1}, the graphs in ${\cal C}_0$ satisfy (\ref{e1}) with equality
and they have maximum dissociation sets avoiding any specified vertex.
Recall that the four operations $(O_1)$ to $(O_4)$
consist in adding a disjoint copy of a graph from ${\cal C}_0$ to some graph $G'$
and connecting this copy by a bridge to $G'$.
It follows that applying one of the four operations to a graph that satisfies (\ref{e1}) with equality
yields a graph that satisfies (\ref{e1}) with equality.
Now, an inductive argument implies 
that all graphs in ${\cal C}$ satisfy (\ref{e1}) with equality.
The existence of maximum dissociation sets avoiding specified vertices
follows easily by induction arguing as in the proof of Lemma \ref{lemma0}
and using Lemma \ref{lemma1}.
\end{proof}

\begin{proof}[Proof of Theorem \ref{theorem3}]
We say that a connected cycle-disjoint graph is {\it extremal} 
if it satisfies (\ref{e1}) with equality.
By Lemma \ref{lemma2}, all graphs in ${\cal C}$ are extremal.
For the converse, let $G$ be extremal.
By induction on the order $n$, we show that $G\in {\cal C}$.
If $G$ is a tree, 
then Theorem \ref{theorem2} implies $G\in {\cal T}\subseteq {\cal C}$.
If $G$ is a cycle of length $\ell$, then $\ell$ is not $0$ modulo $3$
and $G\in {\cal C}_0\subseteq {\cal C}$.
If $G$ is a spiked cycle, then Lemma \ref{lemma1} implies that $G$ is good.
If $G$ is not very good, then $c_1=1$, contradicting the fact that $G$ is extremal. 
Hence, the graph $G$ is a very good spiked cycle, 
and $G\in {\cal C}_0\subseteq {\cal C}$.

Now, let $G$ be neither a tree nor a cycle nor a spiked cycle.
We choose 
$P:BvB'\ldots$,
${\cal B}$,
$p$,
$q\in \{ 0,1\}$, 
$G'$, 
$d$, and
$c_1(G')$ exactly as in the proof of Theorem \ref{theorem1}.

First, we assume that $q=1$.
Since $G$ is cycle-disjoint, 
this implies that $G$ arises by adding the bridge $B'$ 
between $\bigcup\limits_{H\in {\cal B}}H$ and $G'$.
Arguing as in (\ref{ex0}) to (\ref{ex3}) using (\ref{ex4}), 
we obtain that all five inequalities (\ref{ex0}) to (\ref{ex4}) hold with equality.
This implies 
that $p=0$,
that $G'$ is extremal, and 
that $\ell\not\equiv 0\,{\rm mod}\, 3$.
By induction, we obtain that $G'\in {\cal C}$.
Since $G$ is constructed by applying operation $(O_3)$ to $G'$,
we obtain $G\in {\cal C}$.
Hence, we may assume that $q=0$.

Next, we assume that $p\geq 2$.
Note that $p+d\geq 3$.
By Theorem \ref{theorem1}, we obtain 
\begin{eqnarray*}
{\rm diss}(G) 
&\geq & p+{\rm diss}(G')\\
&\geq & p+\Big(n-p-1\Big)-\frac{1}{3}\Big((m-p-d)+1+c_1(G')\Big)\\
&\geq & n-\frac{1}{3}\Big(m+1+c_1\Big).
\end{eqnarray*}
Since equality holds throughout this inequality chain, we obtain 
that $G'$ is extremal, and that $p+d=3$, 
which implies $p=2$ and $d=1$.
By induction, we obtain that $G'\in {\cal C}$.
Since $G$ is constructed by applying operation $(O_2)$ to $G'$,
we obtain $G\in {\cal C}$.
Hence, we may assume that $p=1$.

Next, we assume that $v$ does not lie on a cycle,
that is, the block $B'$ is a $K_2$ and the degree of $v$ is $2$.
Let $w$ be the neighbor of $v$ in $B'$.
Let $G''=G-N_G[v]=G'-w$.
If $G''$ has $k''$ components, then $k''\leq d_G(w)-1$.
By Theorem \ref{theorem1}, we obtain 
\begin{eqnarray*}
{\rm diss}(G) 
&\geq & 2+{\rm diss}(G'')\\
&\geq & 2+\Big(n-3\Big)-\frac{1}{3}\Big((m-d_G(w)-1)+k''+c_1(G'')\Big)\\
&\geq & n-\frac{1}{3}\Big(m+1+c_1\Big).
\end{eqnarray*}
Since equality holds throughout this inequality chain,
each component of $G''$ is extremal, and 
$k''=d_G(w)-1$, 
which implies that $w$ is connected by a bridge to each component of $G''$.
By induction, each component of $G''$ lies in ${\cal C}$.
If $k''=1$, then $G$ is constructed by applying operation $(O_1)$ to $G''$,
and we obtain $G\in {\cal C}$.
Hence, we may assume that $k''=2$.
By symmetry, considering an alternative choice for the path $P$,
we may assume that one component $K$ of $G''$ has order $2$,
which contradicts $K\in {\cal C}$.
Hence, we may assume that $v$ lies on a cycle,
that is, the block $B'$ is a cycle.

Since $G$ is not a spiked cycle, it follows, by symmetry, 
considering alternative choices for the path $P$,
that $G$ arises from the disjoint union of 
\begin{itemize}
\item a spiked cycle $G_0$ of order $n_0$ whose unique cycle is $B'$,
\item a connected cycle-disjoint graph $G_1$ of order $n_1$, and
\item a set $S$ of $s$ isolated vertices,
\end{itemize}
with $n_1+s\geq 2$, 
by adding all possible $1+s$ edges between a vertex $w$ of $G_0$ with $d_{G_0}(w)=2$ and all $1+s$ vertices in $\{ x\}\cup S$, where $x$ is some vertex of $G_1$.
See Figure \ref{figg0g1} for an illustration;
the indicated internal structure of $G_1$ is relevant only later.
Note that $m=m(G_0)+m(G_1)+s+1$.

\begin{figure}[H]
\begin{center}
% This is a LaTeX picture output by TeXCAD.
% File name: [g0g1.pic].
% Version of TeXCAD: 4.51
% Reference / build: 27-Nov-2018 (rev. a75)
% For new versions, check: http://texcad.sf.net/
% Options on the following lines.
%\grade{\on}
%\emlines{\off}
%\epic{\off}
%\beziermacro{\on}
%\reduce{\on}
%\snapping{\on}
%\pvinsert{% Your \input, \def, etc. here}
%\quality{8.000}
%\graddiff{0.005}
%\snapasp{1}
%\zoom{13.4543}
\unitlength 1.3mm % = 3.699pt
\linethickness{0.4pt}
\ifx\plotpoint\undefined\newsavebox{\plotpoint}\fi % GNUPLOT compatibility
\begin{picture}(78,29)(0,0)
%\circle(29,19){18}
\put(38,19){\line(0,1){.4958}}
\put(37.986,19.496){\line(0,1){.4943}}
\put(37.945,19.99){\line(0,1){.4913}}
\put(37.877,20.481){\line(0,1){.4868}}
\multiput(37.782,20.968)(-.024351,.09616){5}{\line(0,1){.09616}}
\multiput(37.66,21.449)(-.024676,.078894){6}{\line(0,1){.078894}}
\multiput(37.512,21.922)(-.024844,.066356){7}{\line(0,1){.066356}}
\multiput(37.338,22.387)(-.024904,.056775){8}{\line(0,1){.056775}}
\multiput(37.139,22.841)(-.024883,.049171){9}{\line(0,1){.049171}}
\multiput(36.915,23.284)(-.024799,.042953){10}{\line(0,1){.042953}}
\multiput(36.667,23.713)(-.024661,.037747){11}{\line(0,1){.037747}}
\multiput(36.396,24.128)(-.024478,.033303){12}{\line(0,1){.033303}}
\multiput(36.102,24.528)(-.0242542,.0294503){13}{\line(0,1){.0294503}}
\multiput(35.787,24.911)(-.0258398,.0280695){13}{\line(0,1){.0280695}}
\multiput(35.451,25.276)(-.0253935,.0247031){14}{\line(-1,0){.0253935}}
\multiput(35.096,25.622)(-.0287708,.0250565){13}{\line(-1,0){.0287708}}
\multiput(34.722,25.947)(-.032616,.025386){12}{\line(-1,0){.032616}}
\multiput(34.33,26.252)(-.037053,.025692){11}{\line(-1,0){.037053}}
\multiput(33.923,26.534)(-.038412,.023612){11}{\line(-1,0){.038412}}
\multiput(33.5,26.794)(-.04362,.023606){10}{\line(-1,0){.04362}}
\multiput(33.064,27.03)(-.049838,.023519){9}{\line(-1,0){.049838}}
\multiput(32.615,27.242)(-.05744,.02333){8}{\line(-1,0){.05744}}
\multiput(32.156,27.429)(-.067015,.023006){7}{\line(-1,0){.067015}}
\multiput(31.687,27.59)(-.079544,.022493){6}{\line(-1,0){.079544}}
\put(31.209,27.725){\line(-1,0){.484}}
\put(30.725,27.833){\line(-1,0){.4892}}
\put(30.236,27.915){\line(-1,0){.493}}
\put(29.743,27.969){\line(-1,0){.4952}}
\put(29.248,27.997){\line(-1,0){.496}}
\put(28.752,27.997){\line(-1,0){.4952}}
\put(28.257,27.969){\line(-1,0){.493}}
\put(27.764,27.915){\line(-1,0){.4892}}
\put(27.275,27.833){\line(-1,0){.484}}
\multiput(26.791,27.725)(-.079544,-.022493){6}{\line(-1,0){.079544}}
\multiput(26.313,27.59)(-.067015,-.023006){7}{\line(-1,0){.067015}}
\multiput(25.844,27.429)(-.05744,-.02333){8}{\line(-1,0){.05744}}
\multiput(25.385,27.242)(-.049838,-.023519){9}{\line(-1,0){.049838}}
\multiput(24.936,27.03)(-.04362,-.023606){10}{\line(-1,0){.04362}}
\multiput(24.5,26.794)(-.038412,-.023612){11}{\line(-1,0){.038412}}
\multiput(24.077,26.534)(-.037053,-.025692){11}{\line(-1,0){.037053}}
\multiput(23.67,26.252)(-.032616,-.025386){12}{\line(-1,0){.032616}}
\multiput(23.278,25.947)(-.0287708,-.0250565){13}{\line(-1,0){.0287708}}
\multiput(22.904,25.622)(-.0253935,-.0247031){14}{\line(-1,0){.0253935}}
\multiput(22.549,25.276)(-.0258398,-.0280695){13}{\line(0,-1){.0280695}}
\multiput(22.213,24.911)(-.0242542,-.0294503){13}{\line(0,-1){.0294503}}
\multiput(21.898,24.528)(-.024478,-.033303){12}{\line(0,-1){.033303}}
\multiput(21.604,24.128)(-.024661,-.037747){11}{\line(0,-1){.037747}}
\multiput(21.333,23.713)(-.024799,-.042953){10}{\line(0,-1){.042953}}
\multiput(21.085,23.284)(-.024883,-.049171){9}{\line(0,-1){.049171}}
\multiput(20.861,22.841)(-.024904,-.056775){8}{\line(0,-1){.056775}}
\multiput(20.662,22.387)(-.024844,-.066356){7}{\line(0,-1){.066356}}
\multiput(20.488,21.922)(-.024676,-.078894){6}{\line(0,-1){.078894}}
\multiput(20.34,21.449)(-.024351,-.09616){5}{\line(0,-1){.09616}}
\put(20.218,20.968){\line(0,-1){.4868}}
\put(20.123,20.481){\line(0,-1){.4913}}
\put(20.055,19.99){\line(0,-1){.4943}}
\put(20.014,19.496){\line(0,-1){1.4859}}
\put(20.055,18.01){\line(0,-1){.4913}}
\put(20.123,17.519){\line(0,-1){.4868}}
\multiput(20.218,17.032)(.024351,-.09616){5}{\line(0,-1){.09616}}
\multiput(20.34,16.551)(.024676,-.078894){6}{\line(0,-1){.078894}}
\multiput(20.488,16.078)(.024844,-.066356){7}{\line(0,-1){.066356}}
\multiput(20.662,15.613)(.024904,-.056775){8}{\line(0,-1){.056775}}
\multiput(20.861,15.159)(.024883,-.049171){9}{\line(0,-1){.049171}}
\multiput(21.085,14.716)(.024799,-.042953){10}{\line(0,-1){.042953}}
\multiput(21.333,14.287)(.024661,-.037747){11}{\line(0,-1){.037747}}
\multiput(21.604,13.872)(.024478,-.033303){12}{\line(0,-1){.033303}}
\multiput(21.898,13.472)(.0242542,-.0294503){13}{\line(0,-1){.0294503}}
\multiput(22.213,13.089)(.0258398,-.0280695){13}{\line(0,-1){.0280695}}
\multiput(22.549,12.724)(.0253935,-.0247031){14}{\line(1,0){.0253935}}
\multiput(22.904,12.378)(.0287708,-.0250565){13}{\line(1,0){.0287708}}
\multiput(23.278,12.053)(.032616,-.025386){12}{\line(1,0){.032616}}
\multiput(23.67,11.748)(.037053,-.025692){11}{\line(1,0){.037053}}
\multiput(24.077,11.466)(.038412,-.023612){11}{\line(1,0){.038412}}
\multiput(24.5,11.206)(.04362,-.023606){10}{\line(1,0){.04362}}
\multiput(24.936,10.97)(.049838,-.023519){9}{\line(1,0){.049838}}
\multiput(25.385,10.758)(.05744,-.02333){8}{\line(1,0){.05744}}
\multiput(25.844,10.571)(.067015,-.023006){7}{\line(1,0){.067015}}
\multiput(26.313,10.41)(.079544,-.022493){6}{\line(1,0){.079544}}
\put(26.791,10.275){\line(1,0){.484}}
\put(27.275,10.167){\line(1,0){.4892}}
\put(27.764,10.085){\line(1,0){.493}}
\put(28.257,10.031){\line(1,0){.4952}}
\put(28.752,10.003){\line(1,0){.496}}
\put(29.248,10.003){\line(1,0){.4952}}
\put(29.743,10.031){\line(1,0){.493}}
\put(30.236,10.085){\line(1,0){.4892}}
\put(30.725,10.167){\line(1,0){.484}}
\multiput(31.209,10.275)(.079544,.022493){6}{\line(1,0){.079544}}
\multiput(31.687,10.41)(.067015,.023006){7}{\line(1,0){.067015}}
\multiput(32.156,10.571)(.05744,.02333){8}{\line(1,0){.05744}}
\multiput(32.615,10.758)(.049838,.023519){9}{\line(1,0){.049838}}
\multiput(33.064,10.97)(.04362,.023606){10}{\line(1,0){.04362}}
\multiput(33.5,11.206)(.038412,.023612){11}{\line(1,0){.038412}}
\multiput(33.923,11.466)(.037053,.025692){11}{\line(1,0){.037053}}
\multiput(34.33,11.748)(.032616,.025386){12}{\line(1,0){.032616}}
\multiput(34.722,12.053)(.0287708,.0250565){13}{\line(1,0){.0287708}}
\multiput(35.096,12.378)(.0253935,.0247031){14}{\line(1,0){.0253935}}
\multiput(35.451,12.724)(.0258398,.0280695){13}{\line(0,1){.0280695}}
\multiput(35.787,13.089)(.0242542,.0294503){13}{\line(0,1){.0294503}}
\multiput(36.102,13.472)(.024478,.033303){12}{\line(0,1){.033303}}
\multiput(36.396,13.872)(.024661,.037747){11}{\line(0,1){.037747}}
\multiput(36.667,14.287)(.024799,.042953){10}{\line(0,1){.042953}}
\multiput(36.915,14.716)(.024883,.049171){9}{\line(0,1){.049171}}
\multiput(37.139,15.159)(.024904,.056775){8}{\line(0,1){.056775}}
\multiput(37.338,15.613)(.024844,.066356){7}{\line(0,1){.066356}}
\multiput(37.512,16.078)(.024676,.078894){6}{\line(0,1){.078894}}
\multiput(37.66,16.551)(.024351,.09616){5}{\line(0,1){.09616}}
\put(37.782,17.032){\line(0,1){.4868}}
\put(37.877,17.519){\line(0,1){.4913}}
\put(37.945,18.01){\line(0,1){.4943}}
\put(37.986,18.504){\line(0,1){.4958}}
%\end
\put(20,19){\circle*{1}}
\put(5,19){\circle*{1}}
\put(38,19){\circle*{1}}
\put(41,2){\circle*{1}}
\put(45,2){\circle*{1}}
\put(49,2){\circle*{1}}
\put(53,19){\circle*{1}}
\put(53,19){\line(-1,0){15}}
\put(20,19){\line(-1,0){15}}
\put(29,19){\makebox(0,0)[cc]{$B'$}}
\put(12,21){\makebox(0,0)[cc]{$B$}}
\put(17,16){\makebox(0,0)[cc]{$v$}}
\put(41,22){\makebox(0,0)[cc]{$w$}}
\put(50,22){\makebox(0,0)[cc]{$x$}}
\put(65.5,24){\oval(17,6)[l]}
\put(65.5,14){\oval(17,6)[l]}
\put(65,24){\makebox(0,0)[cc]{$K_1$}}
\put(65,14){\makebox(0,0)[cc]{$K_r$}}
\put(7,14){\makebox(0,0)[cc]{$G_0$}}
\put(73,19){\makebox(0,0)[cc]{$G_1$}}
\put(45,2){\oval(14,4)[]}
\put(63,19){\oval(30,20)[]}
\put(22,19){\oval(42,20)[]}
\put(63,19.5){\makebox(0,0)[cc]{$\vdots$}}
\put(53,19){\line(3,4){3}}
\put(53,19){\line(2,1){4}}
\put(53,19){\line(2,-1){4}}
\put(53,19){\line(3,-4){3}}
%\emline(49,2)(38,19)
\multiput(49,2)(-.0259433962,.0400943396){424}{\line(0,1){.0400943396}}
%\end
%\emline(38,19)(45,2)
\multiput(38,19)(.0259259259,-.062962963){270}{\line(0,-1){.062962963}}
%\end
%\emline(38,19)(41,2)
\multiput(38,19)(.025862069,-.146551724){116}{\line(0,-1){.146551724}}
%\end
\put(55,2){\makebox(0,0)[cc]{$S$}}
\end{picture}
\end{center}\vspace{-5mm}
\caption{Local structure of $G$.}\label{figg0g1}
\end{figure}
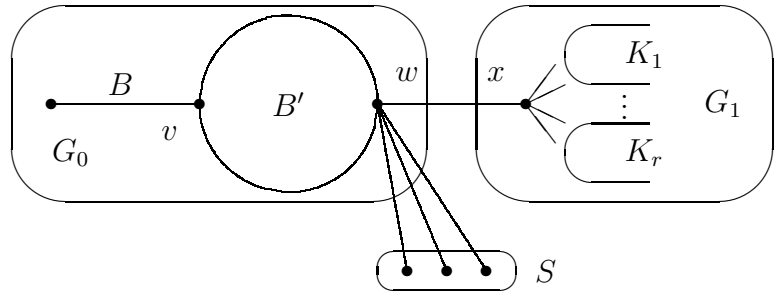
First, we assume that $n_0\equiv 2\,{\rm mod}\,3$.
We now show that $G_0$ has a dissociation set $D_0$ of order 
$\frac{2n_0-1}{3}=n_0-\frac{1}{3}\Big(m(G_0)+1\Big)$
that does not contain $w$.
If $G_0$ is good, then Lemma \ref{lemma1} implies the existence of $D_0$.
If $G_0$ is not good, then, by the parity of $n_0$, Lemma \ref{lemma1} implies
${\rm diss}(G_0)\geq \frac{2n_0-1}{3}+1$,
which also implies the existence of $D_0$.
Note that, in the latter case, the set $D_0$ is not a maximum dissociation set of $G_0$.
Using $D_0$ and Theorem \ref{theorem1}, we obtain 
\begin{eqnarray*}
{\rm diss}(G) 
&\geq & s+\left(n_0-\frac{1}{3}\Big(m(G_0)+1\Big)\right)+{\rm diss}(G_1)\\
&\geq & s+\left(n_0-\frac{1}{3}\Big(m(G_0)+1\Big)\right)
+\left(n_1-\frac{1}{3}\Big(m(G_1)+1+c_1(G_1)\Big)\right)\\
&\geq & n-\frac{1}{3}\Big((m-s)+1+c_1\Big)\\
&\geq & n-\frac{1}{3}\Big(m+1+c_1\Big).
\end{eqnarray*}
Since equality holds throughout this inequality chain,
we obtain that the graph $G_1$ is extremal, 
that $s=0$, and that $c_1(G_1)=c_1$.
By induction, we obtain $G_1\in {\cal C}$.
If $G_0$ is not good, then the union of a maximum dissociation set of $G_0$
and a maximum dissociation set of $G_1$ that does not contain $x$,
cf.~Lemma \ref{lemma2}, yields the contradiction that $G$ is not extremal.
Hence, the spiked cycle $G_0$ is good, and, since $c_1(G_1)=c_1$,
it is very good.
Since $G$ is constructed by applying operation $(O_4)$ to $G_1$,
and we obtain $G\in {\cal C}$.
Hence, we may assume that $n_0\not\equiv 2\,{\rm mod}\,3$.
If $n_0\equiv 1\,{\rm mod}\,3$ and $s\geq 1$,
then exactly the same argument can be repeated 
with $G_0$ and $S$ replaced by $G_0'$ and $S'$,
where the spiked cycle $G_0'$ with at least two spikes 
arises from $G_0$ by attaching one vertex from $S$ to $w$, and 
$S'$ is the set of the remaining $s-1$ vertices from $S$.
Note that $G_0'$ has order $n_0+1\equiv 2\,{\rm mod}\,3$ in that case.
Hence, if $n_0\equiv 1\,{\rm mod}\,3$, then we may assume $s=0$.

Next, we assume that $n_0\equiv 0\,{\rm mod}\,3$.
The tree $T=G_0-w$ has a dissociation set of order
$\left\lceil \frac{2(n_0-1)}{3}\right\rceil=\frac{2n_0}{3}=n_0-\frac{m(G_0)}{3}$.
By Theorem \ref{theorem1}, we obtain the contradiction
\begin{eqnarray*}
{\rm diss}(G) 
&\geq & s+{\rm diss}(T)+{\rm diss}(G_1)\\
&\geq & s+\left(n_0-\frac{m(G_0)}{3}\right)
+\left(n_1-\frac{1}{3}\Big(m(G_1)+1+c_1(G_1)\Big)\right)\\
&>& n-\frac{1}{3}\Big(m+1+c_1\Big).
\end{eqnarray*}
Hence, we may assume that $n_0\equiv 1\,{\rm mod}\,3$,
which implies $s=0$.

Let $G_1'=G_1-x$ have $r$ components $K_1,\ldots,K_r$;
see Figure \ref{figg0g1} for an illustration.
Clearly, we have $r\leq d_G(x)-1$.
The graph $G_0'=G-V(G_1')$ is a spiked cycle with at least two spikes 
that arises from $G_0$ by attaching $x$ to $w$.
Since the order of $G_0'$ is $n_0+1\equiv 2\,{\rm mod}\,3$,
we obtain, similarly as in the case ``$n_0\equiv 2\,{\rm mod}\,3$'', 
that $G_0'$ has a dissociation set $D_0'$ 
of order $n(G_0')-\frac{1}{3}\Big(m(G_0')+1\Big)$ 
that does not contain $x$.
Using $D_0'$ and Theorem \ref{theorem1}, we obtain
\begin{eqnarray*}
{\rm diss}(G) 
&\geq & 
n(G_0')-\frac{1}{3}\Big(m(G_0')+1\Big)
+{\rm diss}(K_1)+\cdots+{\rm diss}(K_r)\\
&\geq & 
n(G_0')-\frac{1}{3}\Big(m(G_0')+1\Big)
+\sum_{i=1}^r \left(n(K_i)-\frac{1}{3}\Big(m(K_i)+1+c_1(K_i)\Big)\right)\\
&=& 
n-\frac{1}{3}\Big((m-d_G(x)+1)+r+1+(c_1(K_1)+\cdots+c(K_r))\Big)\\
&\geq & 
n-\frac{1}{3}\Big(m+1+(c_1(K_1)+\cdots+c(K_r))\Big)\\
&\geq & n-\frac{1}{3}\Big(m+1+c_1\Big).
\end{eqnarray*}
Since equality holds throughout this inequality chain,
we obtain that 
$r=d_G(x)-1$, which implies that every component of $G_1'$ is connected to $x$ by a bridge,
that each $K_i$ is extremal, which, by induction, implies that $K_i\in {\cal C}$, 
and that $c_1(K_1)+\cdots+c(K_r)=c_1$.
If $G_0'$ is not good, then the union of a maximum dissociation set of $G_0'$
and maximum dissociation sets of the $K_i$ that do not contain the neighbors of $x$,
cf.~Lemma \ref{lemma2}, yields the contradiction that $G$ is not extremal.
Hence, the spiked cycle $G'_0$ is good, and, 
since $c_1(K_1)+\cdots+c(K_r)=c_1$, 
it is very good.
If $r=1$, then $G$ is constructed by applying operation $(O_4)$ to $K_1$
and we obtain $G\in {\cal C}$.
Hence, we may assume that $r\geq 2$.
By symmetry, considering alternative choices for the path $P$ 
as well as the previous arguments,
we may assume that $K_r$ is 
either a $P_3$
or a cycle
or a very good spiked cycle,
and that $G-V(K_r)$ is in ${\cal C}$.
It follows that $G$ is constructed by applying 
one of the four operations $(O_1)$ to $(O_4)$ to $G-V(K_r)$.
Hence, we obtain $G\in {\cal C}$,
which completes the proof.
\end{proof}
Within our results, the value $c_1$ can be replaced 
by the maximum number of pairwise vertex-disjoint cycles of length $1$ modulo $3$.
It remains to elucidate the structure of all extremal graphs for Theorem \ref{theorem1}.

\end{document}